\crefname{equation}{}{}
\crefname{lemma}{Lemma}{Lemmas}
\crefname{page}{p.}{pp.}
\numberwithin{equation}{section}
\theoremstyle{plain}
\newtheorem{theorem}{Theorem}[section]
\newtheorem{lemma}{Lemma}[section]
\newtheorem{corollary}{Corollary}[section]
\theoremstyle{definition}
\newtheorem{example}{Example}[section]
\newtheorem{remark}{Remark}[section]
\def\now{%
\minute=\time%
\hour=\time \divide \hour by 60%
\hourMins=\hour \multiply\hourMins by 60%
\advance\minute by -\hourMins%
\zeroPadTwo{\the\hour}:\zeroPadTwo{\the\minute}%
}
\def\zeroPadTwo#1{\ifnum #1<10 0\fi#1}
\renewcommand{\cite}{\citet}
\def\^#1{\ifmmode {\mathaccent"705E #1} \else {\accent94 #1} \fi}
\def\~#1{\ifmmode {\mathaccent"707E #1} \else {\accent"7E #1} \fi}
\def\*#1{#1^\ast}
\edef\-#1{\noexpand\ifmmode {\noexpand\bar{#1}} \noexpand\else \-#1\noexpand\fi}
\def\>#1{\vec{#1}}
\def\.#1{\dot{#1}}
\def\atop{\@@atop}
\def\*#1{\mathscr{#1}}
\renewcommand{\leq}{\leqslant}
\renewcommand{\geq}{\geqslant}
\newcommand{\eq}{\eqref}
\def\t#1{\tilde{#1}}
\newcommand{\IE}{\mathbbm{E}}
\newcommand{\IP}{\mathbbm{P}}
\newcommand{\Var}{\mathop{\mathrm{Var}}\nolimits}
\newcommand{\IR}{\mathbb{R}}
\def\be#1{\begin{equation*}#1\end{equation*}}
\def\ben#1{\begin{equation}#1\end{equation}}
\def\bes#1{\begin{equation*}\begin{split}#1\end{split}\end{equation*}}
\def\besn#1{\begin{equation}\begin{split}#1\end{split}\end{equation}}
\def\ba#1{\begin{align*}#1\end{align*}}
\def\ban#1{\begin{align}#1\end{align}}
\def\mid{\vert}
\def\beqn#1\eeqn{\begin{align}#1\end{align}}
\def\beq#1\eeq{\begin{align*}#1\end{align*}}
\def\E{{\IE}}
\def\P{{\IP}}
\newcommand{\mcl}[1]{\mathcal{#1}}
\def\blfootnote{\xdef\@thefnmark{}\@footnotetext}
\begin{document}

\title{
Normal approximation for exponential random graphs
}
\author{Xiao Fang$^{1,a}$, Song-Hao Liu$^{2,b}$, Qi-Man Shao$^{3,c}$, Yi-Kun
Zhao$^{1,d}$}
\date{\small \it $^1$Department of Statistics,The Chinese University of Hong
  Kong, $^a$xfang@sta.cuhk.edu.hk, $^d$Zyk3355601STAT@link.cuhk.edu.hk\\
$^2$Department of Statistics and Data Science, Southern University of Science and Technology, $^b$liush@sustech.edu.cn\\
$^3$Department of Statistics and Data Science, SICM, NCAMS, Southern University of Science and Technology, $^c$shaoqm@sustech.edu.cn} 
\maketitle

\noindent{\bf Abstract:} 
The question of whether the central limit theorem (CLT) holds for the total number of edges in exponential random graph models (ERGMs) in the subcritical region of parameters has remained an open problem. In this paper, we establish the CLT. As a result of our proof, we also derive a convergence rate for the CLT, an explicit formula for the asymptotic variance, and the CLT for general subgraph counts. To establish our main result, we develop Stein's method for the normal approximation of general functionals of nonlinear exponential families of random variables, which is of independent interest. In addition to ERGMs, our general theorem can also be applied to other models. A key ingredient needed in our proof for the ERGM is a higher-order concentration inequality, which was known in a subset of the subcritical region called Dobrushin's uniqueness region. We use Stein's method to partially generalize such inequalities to the subcritical region.


\medskip

\noindent{\bf AMS 2020 subject classification: }  60F05, 05C80

\noindent{\bf Keywords and phrases:} Central limit theorem, exponential random graphs, higher-order concentration inequalities, Hoeffding decomposition, Stein's method. 

\section{Introduction}
Exponential random graph models (ERGMs) are frequently used as parametric statistical models in network analysis, especially in the sociology community.
They were suggested for directed networks by \cite{holland1981exponential} and for undirected networks by \cite{frank1986markov}.
A general development of the models is presented in \cite{wasserman1994social}.
We focus on undirected networks and refer to
\cite{bhamidi2011mixing} and \cite{chatterjee2013estimating} for the following formulation of the model.

Let $\mathcal{G}_n$ be the space of all simple graphs\footnote{In this paper, simple graphs mean undirected graphs without self-loops or multiple edges.} on $n$ labeled vertices. 
Let $k\geq 1$ be a positive integer. Let $\beta=(\beta_1,\dots, \beta_k)$ be a vector of real parameters, and let $H_1,\dots, H_k$ be (typically small) simple graphs without isolated vertices. 
For any graph $G\in \mcl{G}_n$ and each graph $H_i$, let $|\text{Hom}(H_i,G)|$ denote the number of homomorphisms of $H_i$ into $G$.
A homomorphism is defined as an injective mapping from the vertex set $\mathcal{V}(H_i)$ of $H_i$ to the vertex set $\mathcal{V}(G)$ of $G$, such that each edge in $H_i$ is mapped to an edge in $G$.
For instance, if $H_i$ is an edge, then $|\text{Hom}(H_i,G)|$ is equal to twice the number of edges in $G$. Similarly, if $H_i$ is a triangle, then $|\text{Hom}(H_i,G)|$ is equal to six times the number of triangles in $G$. Given $\beta=(\beta_1,\dots, \beta_k)$ and $H_1,\dots, H_k$, ERGM assigns probability
\begin{equation}\label{ERGM}
p_{\beta}(G)=\frac{1}{Z(\beta)}\exp\left\{ n^2 \sum_{i=1}^k \beta_i t(H_i, G)\right\}
\end{equation}
to each $G\in \mathcal{G}_n$, where 
\be{
t(H_i, G):=\frac{|\text{Hom}(H_i,G)|}{n^{|\mathcal{V}(H_i)|}}
}
denotes the homomorphism density, 
$|\cdot|$ denotes cardinality when applied to a set, and $Z(\beta)$ is a normalizing constant. The $n^2$ and $n^{|\mathcal{V}(H_i)|}$ factors in \eqref{ERGM} ensure a nontrivial large $n$ limit. In this paper, we always take $H_1$ to be an edge by convention ($H_2,\dots, H_k$ are graphs with at least two edges) and assume $\beta_2,\dots, \beta_k$ are positive ($\beta_1$ can be negative).
Note that if $k=1$, then \eqref{ERGM} is the Erd\H{o}s--R\'enyi model $G(n,p)$ where every edge is present with probability $p=p(\beta)=e^{2\beta_1}(1+e^{2\beta_1})^{-1}$, independent of each other. If $k\geq 2$, \eqref{ERGM} ``encourages" the presence of the corresponding subgraphs. See \cref{rem:1} for a discussion on possibly negative values of $\beta_2,\dots, \beta_k$.

Because of the nonlinear nature of \eqref{ERGM}, ERGMs are notoriously more difficult to analyze than classical exponential families of distributions.
To introduce the groundbreaking works by \cite{bhamidi2011mixing} and \cite{chatterjee2013estimating}, we define
\begin{equation}\label{varph1}
\Phi_\beta(a):=\sum_{i=1}^k \beta_i e_i a^{e_i-1},\quad \varphi_\beta(a):=\frac{e^{2\Phi_\beta(a)}}{e^{2\Phi_\beta(a)}+1},
\end{equation}
where $e_i$ is the number of edges in the graph $H_i$. The so-called subcritical region (cf. \cite{bhamidi2011mixing} and \cite{chatterjee2013estimating}) contains all the parameters $\beta=(\beta_1,\dots, \beta_k)$ such that
\begin{equation}
\label{subcritical}
    \text{there is a unique solution $p:=p(\beta)$ to the equation $\varphi_\beta(a)=a$ in $(0,1)$ and $\varphi'(p)<1$.}
\end{equation}
 We always use $p$ to denote the unique solution in the rest of the paper. It can be verified that $p$ satisfies
\begin{equation}\label{eq:p}
2\Phi_\beta(p)=\log(\frac{p}{1-p}).
\end{equation}
\cite{bhamidi2011mixing} and \cite{chatterjee2013estimating} proved that
in the subcritical region, 
ERGM behaves similarly to an Erd\H{o}s--R\'enyi random graph $G(n, p)$ in terms of the asymptotic independence of edges and large deviations within the space of graphons, respectively.

Recently, \cite{reinert2019approximating} measured
the closeness of the ERGM \cref{ERGM} in the subcritical region and the
Erd\H{o}s--R\'enyi random graph $G(n,p)$ in Wasserstein distance with respect to the Hamming metric. To state their result, we identify a simple graph $G$ on $n$ labeled vertices ${1,\dots, n}$ with an element $x=(x_{ij})_{1\leq i<j\leq n}\in \{0,1\}^{\mathcal{I}}$, where $\mathcal{I}:=\{(i,j): 1\leq i<j\leq n\}$ and $x_{ij}=1$ if and only if there is an edge between vertices $i$ and $j$. The correspondence between $G$ and $x$ is one-to-one. 
Similarly, a random graph corresponds to a random element in $\{0,1\}^{\mcl{I}}$.
In this manner, the ERGM \eqref{ERGM} induces a random element $Y\in \{0,1\}^{\mathcal{I}}$. Similarly, $G(n,p)$ induces a random element $X$ in $\{0,1\}^{\mathcal{I}}$. Let $h: \{0,1\}^{\mathcal{I}}\to \mathbb{R}$ be a test function. For $x\in \{0,1\}^{\mathcal{I}}$ and $s\in {\mathcal{I}}$, define $x^{(s,1)}$ to have 1 in the $s$th coordinate and otherwise the same as $x$, and define $x^{(s,0)}$
similarly except there is a 0 in the $s$th coordinate. Define
\begin{equation}\label{deltah}
\Delta_s h(x):=h(x^{(s,1)})-h(x^{(s,0)}), \quad \|\Delta h\|:=\sup_{x\in \{0,1\}^{\mcl{I}}, s\in \mcl{I}}|\Delta_s h(x)|.
\end{equation}
\cite[Theorems~1.13]{reinert2019approximating} proved that in the subcritical region of parameters introduced in the previous paragraph,
we have
\begin{equation}\label{rect5}
|\E h(Y)-\E h(X)|\leq C  \|\Delta h\| n^{3/2},
\end{equation}
where $C:=C(\beta, H)$ is a positive constant depending only on the parameters $\beta_1,\dots, \beta_k$ and the subgraphs $H_1,\dots, H_k$ in the definition of the ERGM.
Choosing $h(x)=\sum_{1\leq i<j\leq n}x_{ij}$ (so that $\|\Delta h\|=1$), we obtain
\begin{equation}\label{eq:LLN}
\left|\frac{\E\sum_{1\leq i<j\leq n} Y_{ij}}{{n\choose 2}}-p\right| \leq \frac{C}{\sqrt{n}}.
\end{equation}

Because the total number of edges in $G(n,p)$ satisfies a CLT (normal approximation of binomial distributions), it is natural to conjecture that the same holds for ERGM in the subcritical region.
However, 
\cref{rect5} can not give a CLT for the total number of edges in ERGM, because the error rate in \cref{rect5} is not small enough and we can not standardized $\sum_{1\leq i<j\leq n} Y_{ij}$ and $\sum_{1\leq i<j\leq n} X_{ij}$ in the same way (c.f. \cref{eq:edgecountstan}). On the other hand, the CLT
 was proven in the special case of two-star ERGMs in the subcritical region (where $k=2$, $\beta_2>0$ in \eqref{ERGM} and $H_2$ is a two-star, i.e., a graph with three vertices and two edges connecting them) by \cite{mukherjee2023statistics}. 
They used an explicit relation between the number of two-stars and the degrees of vertices to prove the CLT (see also \cite{park2004statistical}).
\cite{bianchi2024limit} proved the CLT for the edge-triangle model, although we do not understand how they justified the interchange of limit and differentiation for the free energy, which is crucial for their proof to work.
Partial attempts to analyze general ERGMs in the subcritical region were made by
\cite[Theorem~2]{ganguly2019sub}, who proved a CLT for the number of edges in $o(n^2)$ disconnected locations in the graph using the method of moments. 
\cite{sambale2020logarithmic} showed that if a CLT for the number of edges can be proved, then a CLT for general subgraph counts follows as a consequence for the ERGM in Dobrushin's uniqueness region, that is, when (recall the definition of $\Phi_\beta$ in \cref{varph1})
\ben{\label{eq:Dobrushin}
\Phi_{\beta}'(1)<2.
}
However, whether CLT holds for the number of edges for general ERGMs is an open problem. 

In this paper, we employ Stein's method (\cite{stein1972bound}) to address this problem and prove the CLT with a non-asymptotic error bound 
(cf. \cref{thm:ERGM}). As a byproduct, we also derive an explicit formula for the asymptotic variance (cf. \cref{eq:sigmasq}) and the CLT for general subgraph counts (cf. \cref{cor:general}). 

To prove CLT for ERGM, we consider a more general statistical physics model. Assume the joint density of $N$ particles, $\{Y_1,\dots, Y_N\}$, is given in the form of 
\ben{\label{eq:nonlinear}
\exp(g(y_1,\dots, y_N))\prod_{i=1}^N f_i(y_i),
}
where $f_i$ is the marginal distribution of the particle $Y_i$ at infinite temperature (corresponding to the exponential factor equal to one), and $g$ represents the interactions among these particles. ERGM \cref{ERGM} is a special case of \cref{eq:nonlinear}. See \cref{ex:CW} for another example.

In \cref{sec:cltexp}, we develop a CLT for general functionals of nonlinear exponential families as given by \cref{eq:nonlinear} by generalizing the approach of \cite{chatterjee2008new}. 
In \cref{sec:ERGM}, we apply the general CLT to the ERGM.
Our general CLT may be useful for other models as well, see, for example, \cref{ex:CW}.

\section{CLT for nonlinear exponential families}\label{sec:cltexp}
In this section, we develop a CLT for functionals of nonlinear exponential families \cref{eq:nonlinear}.
Let $N\geq 1$ be an integer and $X=(X_{1}, \ldots ,X_{N})$ be independent random variables (each $X_i$ has the baseline distribution $f_i$ in \cref{eq:nonlinear}). Let $Y=(Y_{1}, \ldots ,Y_{N})$ be a random vector following the distribution
\begin{equation}
   \label{eq:1}
  \begin{aligned}
     \P(Y=dy) = \frac{h(y)}{\E h(X)} \P(X=dy),\ y\in \IR^N,
  \end{aligned}
\end{equation}
where $h(x)=\exp\left\{ g(x) \right\}>0$ for a measurable function $g: \mathbb{R}^{N}\rightarrow \mathbb{R}$ such that $\E h(X)<\infty$. Let $f: \mathbb{R}^{N}\rightarrow \mathbb{R}$ be a function satisfying 
\begin{equation}
   \label{eq:2}
  \begin{aligned}
  \E f(Y)=0.
  \end{aligned}
\end{equation}
Let $W:=f(Y)$. Assume without loss of generality that $W$ is appropriately normalized to have a variance close to 1. 
To measure the distributional distance between $\mcl{L}(W)$ and the standard normal distribution $N(0,1)$, we consider 
the  Kolmogorov distance
\begin{equation}
   \label{eq:dk}
  \begin{aligned}
  d_{\text{Kol}}(W, Z):=\sup_{x\in \mathbb{R}} |\P(W\leq x)-\P(Z\leq x)|
  \end{aligned}
\end{equation}
and
the Wasserstein distance
\ben{\label{eq:d2dist}
d_{\text{Wass}}(W, Z):=\sup_{\psi: \|\psi'\|_\infty\leq 1} |\E \psi(W)-\E \psi(Z)|,
}
where $Z\sim N(0,1)$, the sup in \cref{eq:d2dist} is taken over all absolutely continuous functions $\psi:\IR\to \IR$ and $\|\cdot\|_\infty$ denotes the $L^\infty$ norm. 

As in typically applications of Stein's method, to exploit the dependency structure of $W$, we construct a small perturbation of $W$. Our construction is motivated by the work of \cite{chatterjee2008new}, Construction 4A of \cite{chen2010stein} and \cite{shao2024berry}, although they only considered functionals of independent random variables (see \cref{rem:theorem1(2)} for some new features of our more general setting). 
Let $X'=(X_{1}', \ldots ,X_{N}')$ be an independent copy of $X$.
Let 
\ben{\label{eq:X[i]}
X^{[i]}=(X_{1}, \ldots ,X_{i}, X_{i+1}', \ldots, X_{N}')
}
and 
\ben{\label{eq:X(i)}
X^{(i)}=(X_{1}, \ldots ,X_{i-1}, X_{i}', X_{i+1}, \ldots, X_{N}).
}
We first introduce the following quantities. These quantities involve moments and conditional expectations of $f$ and $g$ under the perturbation from $X$ to $X'$. 

Define
\begin{equation}
    \label{eq:t4}
   \begin{aligned}
   \Delta_{1,i}(X) =   \frac{1}{2}\E \left[ \left(f(X) -f(X^{(i)})\right)
    \left(f(X^{[i]})-f(X^{[i-1]})\right)\bigg| X\right],
   \end{aligned}
 \end{equation}
 and
 \begin{equation}
    \label{eq:t5}
   \begin{aligned}
  \Delta_{2,i}(X)= \frac{1}{2}\E \left[\left(g(X)- g(X^{(i)})\right)
    \left(f(X^{[i]})-f(X^{[i-1]})\right)\bigg|X\right]. 
   \end{aligned}
 \end{equation}
Let $D^*_{i}(X,X')$ be any symmetric function
of $X$ and $X'$ such that 
\ben{\label{eq:Dstar}
D^*_{i}(X,X')=D^*_{i}(X',X)\geq \lvert f(X^{[i]})-f(X^{[i-1]}) \rvert.
}
Let 
\ben{\label{eq:b}
 a:=\E h(X),\quad b=\E \left[\sum_{i=1}^N \Delta_{1,i}(Y)\right], \quad \text{(we assume $b\ne 0$)}.
 }
As we will see in applications below, $b$ is typically of a constant order (cf. \cref{eq:2007,eq:basym}), although not equal to $\Var(W)$ in general. 
Finally, let
 \begin{equation}
    \label{eq:t1}
   \begin{aligned}
     \delta_{1}=& \frac{1}{a}\sum_{ i=1 }^{ N } \E \Big\{ h(X) (f(X)- f(X^{(i)}))^2 |f(X^{[i]})-f(X^{[i-1]}| \Big\}\\
     &+\frac{1}{a}\sum_{ i=1 }^{ N } \E \Big\{ h(X)  \exp\left[ |g(X)- g(X^{(i)})| \right] (g(X)-g(X^{(i)}))^2\\&\qquad\qquad \times \left(\lvert g(X)- g(X^{(i)})\rvert+ \lvert f(X)- f(X^{(i)})\rvert\right) \left\lvert f(X^{[i]})-f(X^{[i-1]})\right\rvert \Big\}, 
   \end{aligned}
 \end{equation}
\begin{equation}
    \label{eq:t1'}
   \begin{aligned}
     \delta_{1}'=& \frac{1}{a}\sum_{ i=1 }^{ N } \E\left\{ h(X) \exp\left[ |g(X)- g(X^{(i)})| \right] D^*_{i}(X,X') \left\lvert f(X)- f(X^{(i)})\right\rvert |g(X)- g(X^{(i)})|\right\}
                 \\&+ \frac{1}{a}\E\left\{ h(X) \Big\lvert\sum_{ i=1 }^{ N } \E \big[ D^*_{i}(X,X')(f(X)- f(X^{(i)}))|X\big]\Big\rvert\right\}\\
     &+\frac{1}{a}\sum_{ i=1 }^{ N } \E \Big\{ h(X)  \exp\left[ |g(X)- g(X^{(i)})| \right] (g(X)-g(X^{(i)}))^2\\&\qquad\qquad \times \left(\lvert g(X)- g(X^{(i)})\rvert+ \lvert f(X)- f(X^{(i)})\rvert\right) \left\lvert f(X^{[i]})-f(X^{[i-1]})\right\rvert \Big\},
   \end{aligned}
 \end{equation}
 \begin{equation}
    \label{eq:t2}
   \begin{aligned}
 \delta_{2}  &= \sqrt{\Var(\sum_{ i=1 }^{ N } \Delta_{1,i}(Y) )} ,
   \end{aligned}
 \end{equation}
\begin{equation}
   \label{eq:t3}
  \begin{aligned}
 \delta_{3}=   \sqrt{\Var \left\{ \sum_{ i=1 }^{ N } \Delta_{2,i}(Y) -(1-b) f(Y)\right\}}.
  \end{aligned}
\end{equation}
The following is our general CLT, which is a generalization of the result for functionals of independent random variables by \cite{chatterjee2008new}. 
We defer the proof to \cref{sec:proofCLT}.

\begin{theorem}\label{theorem:1}
For $W=f(Y)$ defined above and for the distributional distances $d_{\text{Wass}}$ and $d_{\text{Kol}}$ defined in \cref{eq:d2dist} and \cref{eq:dk} respectively, we have
 \begin{equation}
    \label{eq:3}
   \begin{aligned}
d_{\text{Wass}}(W, Z)\leq \frac{C}{|b|}( \delta_{1}+\delta_{2}+
\delta_{3}),
   \end{aligned}
 \end{equation}
 \begin{equation}
    \label{eq:ergmdk}
   \begin{aligned}
   d_{\text{Kol}}(W, Z)\leq \frac{C}{|b|}( \delta_{1}'+\delta_{2}+
\delta_{3}),
   \end{aligned}
 \end{equation}
 where $C$ is an absolute constant. 
 \end{theorem}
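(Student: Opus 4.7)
I would prove both bounds via Stein's method. For a test function $\psi$, let $F_\psi$ denote the solution of the normal Stein equation $F'(w) - w F(w) = \psi(w) - \IE \psi(Z)$; the standard estimates give that $\|F_\psi\|_\infty$, $\|F_\psi'\|_\infty$, $\|F_\psi''\|_\infty$ are all bounded when $\psi$ is Lipschitz, whereas when $\psi$ is an indicator only $F_\psi$ and $F_\psi'$ are bounded (this distinction is what will force the two forms $\delta_1$ vs $\delta_1'$). The target is to bound $\babs{\IE[F'(W) - W F(W)]}$ and divide by $|b|$.

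The central identity comes from telescoping together with an antisymmetrization trick, generalizing \cite{chatterjee2008new} from the independent to the $h$-weighted setting. Starting from $a\,\IE[W F(W)] = \IE[h(X) f(X) F(f(X))]$, I would use $\IE[h(X) f(X)] = 0$ to insert the centering $F(f(X)) - \IE F(W)$, then use independence of $X,X'$ and $\IE[h(X)(F(f(X)) - \IE F(W))] = 0$ to replace $f(X)$ by $f(X) - f(X')$ at no cost. Telescoping $f(X) - f(X') = \sum_i (f(X^{[i]}) - f(X^{[i-1]}))$ and antisymmetrizing each summand under the coordinate swap $X_i \leftrightarrow X_i'$ (which preserves the joint law of $(X, X')$, sends $X \mapsto X^{(i)}$, and flips the sign of $f(X^{[i]}) - f(X^{[i-1]})$) yields
\begin{equation*}
a\,\IE[W F(W)] = \tfrac{1}{2}\sum_i \IE\Bigl[ (f(X^{[i]}) - f(X^{[i-1]}))\bigl\{ h(X)(F(f(X)) - \IE F(W)) - h(X^{(i)})(F(f(X^{(i)})) - \IE F(W)) \bigr\} \Bigr].
\end{equation*}
Splitting the brace into $(h(X) - h(X^{(i)}))(F(f(X)) - \IE F(W)) + h(X^{(i)})(F(f(X)) - F(f(X^{(i)})))$, using the expansion $h(X) - h(X^{(i)}) = h(X)(1 - e^{g(X^{(i)}) - g(X)}) = h(X)(g(X) - g(X^{(i)})) + O(h(X) e^{|g(X)-g(X^{(i)})|}(g(X)-g(X^{(i)}))^2)$, Taylor-expanding $F$ to first order at $f(X)$, and replacing the prefactor $h(X^{(i)})$ by $h(X)$, the two leading contributions after conditioning on $X$ and dividing by $a$ are exactly $\IE\bigl[\sum_i \Delta_{2,i}(Y)(F(W) - \IE F(W))\bigr]$ and $\IE\bigl[F'(W) \sum_i \Delta_{1,i}(Y)\bigr]$. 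The discarded remainders (second-order Taylor of $F$, second-order error in the $h$-expansion, and the mixed error from $h(X^{(i)}) \leftrightarrow h(X)$) are precisely what $\delta_1$ bounds in the $\|F''\|_\infty \leq C$ regime.

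To close the argument, I would insert this identity into $\IE[F'(W) - W F(W)]$ and introduce the centerings $\sum_i \Delta_{1,i}(Y) - b$ and $\sum_i \Delta_{2,i}(Y) - (1-b) f(Y)$; using $\IE W = 0$, the leftover scalar pieces rearrange into $b \cdot \IE[W F(W) - F'(W)]$ on one side, which is the origin of the factor $1/|b|$ in \cref{eq:3}. Cauchy--Schwarz then bounds the two residual functionals by $\|F'\|_\infty \delta_2$ and a constant multiple of $\|F - \IE F(W)\|_\infty \delta_3$, giving the Wasserstein estimate.

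The main obstacle is the Kolmogorov bound \cref{eq:ergmdk}, where the Stein solution has only bounded $F'$, so the second-order Taylor step used above is unavailable. Following the concentration approach of \cite{chatterjee2008new} and \cite{shao2024berry}, I would replace the pointwise Taylor remainder by integral bounds expressed through the symmetric majorant $D^*_i(X,X') \geq |f(X^{[i]}) - f(X^{[i-1]})|$. This generates the two extra pieces of $\delta_1'$: the diagonal term $D^*_i|f(X) - f(X^{(i)})||g(X) - g(X^{(i)})|$, and, crucially, the ``concentration'' term $\babs{\sum_i \IE[D^*_i(X,X')(f(X) - f(X^{(i)}))\mid X]}$, whose control requires a careful estimate of a conditional sum rather than of each term separately. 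Establishing this last bound is the most delicate technical step of the whole proof.
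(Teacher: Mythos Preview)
Your overall architecture matches the paper's: Stein equation, telescoping $f(X)-f(X')$, antisymmetrization under $X_i\leftrightarrow X_i'$, and the recursive identity that produces the factor $1/|b|$. Your device of centering $F$ by $\IE F(W)$ up front is a harmless repackaging of what the paper calls the term $II$. One small correction: your ``mixed error from $h(X^{(i)})\leftrightarrow h(X)$'' is actually zero. Applying the swap $X_i\leftrightarrow X_i'$ to $h(X^{(i)})\bigl(F(f(X))-F(f(X^{(i)}))\bigr)\bigl(f(X^{[i]})-f(X^{[i-1]})\bigr)$ flips the sign of both non-$h$ factors, so the expectation with $h(X^{(i)})$ equals the expectation with $h(X)$ exactly; no error term is incurred.

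There is, however, a genuine gap in your handling of the remainder from the $h$-expansion. Your quadratic error $O\bigl(h(X)e^{|g(X)-g(X^{(i)})|}(g(X)-g(X^{(i)}))^2\bigr)$, multiplied by $|F(f(X))-\IE F(W)|\cdot|f(X^{[i]})-f(X^{[i-1]})|$, gives a contribution of size $(g(X)-g(X^{(i)}))^2\,|f(X^{[i]})-f(X^{[i-1]})|$. But the second term of $\delta_1$ in the theorem carries the \emph{extra} factor $\bigl(|g(X)-g(X^{(i)})|+|f(X)-f(X^{(i)})|\bigr)$ and is therefore strictly smaller; your remainder as written is not bounded by $\delta_1$. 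The missing step is to antisymmetrize this quadratic remainder \emph{once more}: writing it as half the difference under $X_i\leftrightarrow X_i'$ and then Taylor-expanding
\[
e^{g(X)+U(g(X^{(i)})-g(X))}F(f(X))-e^{g(X^{(i)})+U(g(X)-g(X^{(i)}))}F(f(X^{(i)}))
\]
is what manufactures the additional factor. The paper singles this step out (its Remark~2.2) as crucial for the ERGM application, and nothing in your sketch suggests it. The same extra symmetrization is needed for the third term of $\delta_1'$ in the Kolmogorov case.
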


\begin{remark}\label{rem:theorem1}
If $g(x)=0$, then the problem reduces to the normal approximation of functionals of independent random variables studied by \cite{chatterjee2008new}. In this case, $h(X)=\E h(X)=a=1$, $g(X)=g(X^{(i)})$, $b=1$ (if $\Var(W)=1$), $\delta_3=0$ and the second term in $\delta_1$ equals 0. Then, our Wasserstein bound simplifies to a similar bound to \cite[Theorem~2.2]{chatterjee2008new}, except that we use the fixed order interpolation instead of a random order interpolation (cf. Constructions 4A and 4B in \cite{chen2010stein} and \cite{shao2024berry}).

As explained in \cite{chatterjee2008new}, for the classical case of standardized sum of i.i.d.\ random variables (where $f$ is a summation and $f(X)-f(X^{(i)})=f(X^{[i]})-f(X^{[i-1]})=(X_i-X_i')/\sigma$ for a constant $\sigma\asymp \sqrt{N}$), the bound is of the optimal order $O(1/\sqrt{N})$, provided that each $X_i$ has a finite fourth moment.

Our Kolmogorov bound in this special case of $g(x)=0$, obtained by adapting the method of \cite{shaozhang2019}, is
also of the optimal order $O(1/\sqrt{N})$ for sums of i.i.d.\ random variables under the finite fourth moment assumption. 
\end{remark}

\begin{remark}\label{rem:theorem1(2)}

If $g(x)\neq 0$, our result and its proof reveal some interesting new features of the problem. First, due to the deviation of the distribution of $Y$ from $X$, $b$ in general no longer equals the variance of $W$. We need to develop a new recursive argument in Stein's method (cf. \cref{eq:outline}) to address such a discrepancy. Second, additional terms appear in $\delta_1$, $\delta_1'$, and $\delta_3$ involving the difference $g(X)-g(X^{(i)})$ (the influence of each $X_i$ on the function $g$ governing the exponential change of measure). Intuitively, we need to control such differences to prevent wild behavior of the exponential change of measure. Third, a symmetry argument is employed to gain one additional factor of $|g(X)-g(X^{(i)})|$ in the last terms of $\delta_1$ and $\delta_1'$ (cf. \cref{eq:23}). This additional factor is crucial for obtaining a vanishing error bound for the application to the ERGM.
\end{remark}

\begin{example}[Curie--Weiss model]
  \label{ex:CW}
To give a simple example to illustrate the application of \cref{theorem:1}, we consider the Curie--Weiss model without external field in the subcritical region. This model is well-studied in the literature (cf. \cite{ellis1978limit,ellis1978statistics}) and the exchangeable pairs approach in Stein's method also works for this model (cf. \cite{chatterjee2011nonnormal,chen2013stein}).


Let $X_1,\dots, X_N$ be i.i.d.\ with distribution
$P\left(X_i= \pm 1\right)=1 / 2$. Let $0<\beta<1$ be a fixed parameter (inverse temperature). 
Let
$$
\begin{gathered}
h(x)=\exp \left(\frac{\beta}{2 N} s^2(x)\right),\quad  g(x)=\frac{\beta}{2 N} s^2(x),\\
s(x)=x_1+\cdots+x_N, \quad f(x)=\frac{s(x)}{\sigma_{N}}, \quad \sigma_{N}^2=\frac{N}{1-\beta} .
\end{gathered}
$$
Let $Y=(Y_{1},Y_{2}, \ldots ,Y_{N})$ have the following distribution
\begin{equation*}
   \label{eq:2011}
  \begin{aligned}
     \P(Y=dy) = \frac{h(y)}{\E h(X)} \P(X=dy),\ y\in \IR^N,
  \end{aligned}
\end{equation*}
and let
\be{
W=\frac{\sum_{i=1}^N Y_i}{\sigma_N}.
}
where $\sigma_N^2$ is the asymptotic variance of $\sum_{i=1}^N Y_i$. It is known that $W$ converges in distribution to $N(0,1)$ as $N\to \infty$.
Using \cref{theorem:1}, we can prove this CLT together with the optimal rate of convergence $O(1/\sqrt{N})$ in both the Wasserstein and the Kolmogorov distances. We defer the details to \cref{sec:CW}.


\end{example}

\section{Normal approximation for ERGM}\label{sec:ERGM}

To prepare for the statement of our main result for the ERGM, 
let $Y=\{Y_{ij}: 1\leq i<j\leq n\}$ be edge indicators of the ERGM \cref{ERGM}, i.e., $Y_{ij}=1$ if there is an edge connecting the vertices $i$ and $j$ and $Y_{ij}=0$ otherwise. In other words, the joint probability mass function of $Y$ is
\be{
p_\beta(y)=\frac{1}{Z(\beta)} \exp\left\{\sum_{j=1}^k \frac{\beta_j}{n^{|\mathcal{V}(H_j)|-2}}  |\text{Hom}(H_j, G_y)|\right\},\quad y\in \{0,1\}^\mcl{I},
}
where $\mcl{I}=\{(i,j):1\leq i<j\leq n\}$ and $G_y$ is the graph with edge indicators $y$. Label the vertices of $H_{j}$ by $l_1,l_2,\ldots l_{v_j}$ and let $\mathcal{E}(H_j)$ be the set of its edges, then 
(
let $y_{ji}=y_{ij}$ for $i<j$), 
\begin{equation*}
  \begin{aligned}
    |\text{Hom}(H_{j},G_y)|= \sum_{\begin{subarray}{c} k_{1},k_{2}, \ldots ,k_{v_{j}}\in \{1,\dots, n\} \\ k_{1},k_{2}, \ldots ,k_{v_{j}} \text{ are distinct}\end{subarray}} \prod_{\{ l_p,l_q \}\in \mathcal{E}(H_{j}) } y_{k_{p},k_{q}}.
  \end{aligned}
\end{equation*}
Let
\ben{\label{eq:edgecountstan}
W=W_n=\frac{\sum_{1\leq i<j\leq n}Y_{ij}-\mu_n}{\sigma_n},\quad \mu_n=\E\sum_{1\leq i<j\leq n} Y_{ij}
} 
and \ben{\label{eq:sigmasq}
\sigma_n^2=\sigma_n^2(\beta)=\frac{Np(1-p)}{1-\sum_{j=2}^k \beta_j e_j(e_j-1) 2p^{e_j-1}(1-p)},\quad N={n\choose 2}.
}
Note that for the denominator (recall $e_j\geq 2$ for $j=2,\dots, k$ and \cref{varph1}), we have
\begin{equation}
    \label{3001}
    \sum_{j=2}^k \beta_j e_j(e_j-1) 2p^{e_j-1}(1-p)= 2 p(1-p) \Phi_\beta'(p).
\end{equation}
On the other hand, by \cref{varph1,eq:p}
\begin{equation}\label{3002}
  \varphi_\beta'(p)= \frac{e^{2 \Phi_\beta(p)}}{(e^{2 \Phi_\beta(p)}+1)^2} 2 \Phi_\beta'(p)= 2 p(1-p) \Phi_\beta'(p),
\end{equation}
which is $<1$ in subcritical region \cref{subcritical}. Therefore, by \cref{3001,3002}, $\sigma_n^2$ is well defined and is of the order $\asymp n^2$.
For the case $k=2$ and $H_2$ is a two-star, the expression of variance \cref{eq:sigmasq} coincides with that in \cite[Theorem~1.4]{mukherjee2023statistics}. They derived it using explicit computations for the two-star ERGM, while our general expression arises naturally in the application of Stein's method (cf. \cref{eq:1006}).

The following is our main result for the ERGM.
We defer its proof to \cref{sec:proofERGM}.

\begin{theorem}\label{thm:ERGM} 
For the ERGM \cref{ERGM} in the subcritical region \cref{subcritical}, the normalized number of edges \cref{eq:edgecountstan} is asymptotically normal as $n\to \infty$ with error bounds
\ben{\label{eq:thmsub}
  d_{\text{Wass}}(W, Z)\leq \frac{C}{n^{1/4}} \text{ and } d_{\text{Kol}}(W, Z)\leq \frac{C}{n^{1/4}},
}
where $Z\sim N(0,1)$ and
$C:=C(\beta, H)$ is a positive constant depending only on the parameters $\beta_1,\dots, \beta_k$ and the subgraphs $H_1,\dots, H_k$ in the definition of the ERGM.
Moreover, in Dobrushin's uniqueness region \cref{eq:Dobrushin}, we have
\ben{\label{eq:thmdob}
  d_{\text{Wass}}(W, Z)\leq \frac{C}{\sqrt{n}} \text{ and } d_{\text{Kol}}(W, Z)\leq \frac{C}{\sqrt{n}}.
}
\end{theorem}

\begin{remark}\label{rem:1}
\cref{thm:ERGM} leaves the following two problems open. 

1. From \cite[Theorem 1]{ganguly2019sub}, $W_n$ satisfies a Gaussian concentration inequality and the third absolute moments of $\{W_n\}_{n=1}^\infty$ are uniformly bounded in $n$. 
This implies the sequence $W_n^2$ is uniformly integrable. Together with the CLT for $W_n$, we have $\E W_n^2\to 1$ and
$\sigma_n^2$ must be the asymptotic variance of $\sum_{1\leq i<j\leq n}Y_{ij}$. 
However, 
whether $\mu_n/{n\choose 2}-p=O(1/n)$ remains open (this is a faster rate than that of \cref{eq:LLN}). This fact was proved for the two-star ERGM by \cite{mukherjee2023statistics} using an explicit computation.


2. The subcritical region for possibly negative values of $\beta_2,\dots, \beta_k$ is less well understood. See \cite[Sections~6 and 7]{chatterjee2013estimating} for some partial results. Dobrushin's uniqueness region \cref{eq:Dobrushin} can be similarly defined for this case, simply changing $\beta$ to their absolute values (cf. \cite{sambale2020logarithmic}). However, we need to restrict to positive $\beta_2,\dots, \beta_k$ because we use the results of \cite{ganguly2019sub} in \cref{eq:GN19}, which crucially rely on $\beta_2,\dots, \beta_k$ being positive. If \cref{eq:GN19}, even with a slower rate, can be proved for possibly negative values of $\beta_2,\dots, \beta_k$, then our result can be extended to that case.
\end{remark}

\begin{corollary}[CLT for general subgraph counts]\label{cor:general}
Under the same setting as \cref{thm:ERGM},
     let $H$ be a subgraph with $v$ vertices and $e$ edges and let $$W_H=\frac{|\text{Hom}(H,G_Y)|-\E |\text{Hom}(H,G_Y)|}{2 n^{v-2} e p^{e-1}\sigma_n}. $$ Then, in the subcritical region,
    $$ d_{\text{Wass}}(W_H,Z)\leq \frac{C}{n^{1/4}}.$$
\end{corollary}
\begin{proof}
Recall $W$ from \cref{eq:edgecountstan}.
    By the definition of the Wasserstein distance, we have
    \begin{equation}
        d^2_{\text{Wass}}(W_H,W)\leq \E (W-W_H)^2 \leq \frac{C n^{2v-\frac{5}{2}}}{n^{2v-4} \sigma_n^2} \leq \frac{C}{\sqrt{n}},
    \end{equation}
    where the second inequality follows from \cref{lemma:1002} in \cref{sec:proofERGM} and the last inequality follows from \cref{eq:sigmasq}. Thus, by \cref{thm:ERGM} and the triangle inequality for the Wasserstein distance,
    \begin{equation}
        d_{\text{Wass}}(W_H,Z)\leq d_{\text{Wass}}(W_H,W)+d_{\text{Wass}}(Z,W)\leq \frac{C}{n^{1/4}}.
    \end{equation}
\end{proof}


\section{Proof of Theorem \ref{theorem:1}}\label{sec:proofCLT}
\begin{proof}[Proof of \cref{theorem:1}]
We first consider the Wasserstein distance. Let $F$ be the bounded solution to the Stein equation
\ben{\label{eq:steineq}
w F(w)-F'(w)=\psi(w)-\E \psi(Z),\quad \forall\ w\in \IR.
}
If $\|\psi'\|_\infty\leq 1$, it is known that $F$ satisfies
\ben{\label{eq:solproperty}
\|F\|_\infty, \|F'\|_\infty, \|F''\|_\infty\leq 2.
}
See, for example, \cite[Lemma~2.4]{chen2010normal}.
From \cref{eq:steineq} and the definition of $d_{\text{Wass}}$ in \cref{eq:d2dist}, we have
\ben{\label{eq:bound1}
d_{\text{Wass}}(W, Z)\leq \sup_{F} |\E [W F(W)]-\E F'(W)|,
}
where the sup is taken over all functions $F$ satisfying \cref{eq:solproperty}.
A typical application of Stein's method proceeds to bound the right-hand side of \cref{eq:bound1} using the structure of $W$ and Taylor's expansion.

An important fact relating the expectation with respect to $Y$ and that to $X$ we use frequently below is that from \cref{eq:1}, for functions $G: \mathbb{R}^N\to \mathbb{R}$ such that $\mathbb{E} |h(X)G(X)|<\infty$, we have
\ben{\label{eq:transfer}
\E G(Y)=\frac{1}{a}\E [h(X)G(X)],\quad \text{where}\ a:=\E h(X).
}
Fix a function $F:\IR\to \IR$ satisfying \cref{eq:solproperty}.
From \cref{eq:transfer}, we have 
\begin{equation}
   \label{eq:4}
  \begin{aligned}
    \E [f(Y)F(f(Y))]=\frac{\E [h(X) f(X)F(f(X))]}{\E h(X)}= I+II,
  \end{aligned}
\end{equation}
where  
\begin{equation}
   \label{eq:5}
  \begin{aligned}
  I:=\frac{\E \big[(f(X)-\E f(X)) h(X)F(f(X))\big]}{\E h(X)}
  \end{aligned}
\end{equation}
and 
\begin{equation}
   \label{eq:6}
  \begin{aligned}
     II: = \frac{\E  [h(X)F(f(X))]}{\E h(X)} \E f(X)=\E F (f(Y)) \E f(X).
  \end{aligned}
\end{equation}

{\bf Outline of the proof.} Unlike a typical application of Stein's method, because of the deviation of the distribution of $Y$ from $X$, our proof structure is more involved. We first give some lemmas which make the proof more clear.
\begin{lemma}\label{lemma:I}
For  $I$ defined in \cref{eq:5}, we have 
\begin{equation}
   \label{eq:I1andI2}
  \begin{aligned}
  I=I_{1}+I_{2}+ O(1)\delta_{1},
  \end{aligned}
\end{equation}
where $\delta_{1}$ is defined in \cref{eq:t1},
\be{
I_1=\frac{1}{2a} \sum_{ i=1 }^{ N } \E \Big[ h(X)F'(f(X))\big(f(X)-f(X^{(i)})\big)
    \big(f(X^{[i]})-f(X^{[i-1]})\big)\Big],
}
\be{
I_2=\frac{1}{2a} \sum_{ i=1 }^{ N } \E \Big[ h(X) F(f(X)) \big(g(X)- g(X^{(i)})\big)
    \big(f(X^{[i]})-f(X^{[i-1]})\big)\Big].
}
\end{lemma}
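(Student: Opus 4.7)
The proof will hinge on two uses of the $X_i \leftrightarrow X_i'$ exchangeability combined with a Taylor expansion. Since $X'$ is an independent copy of $X$, we have $\E f(X') = \E f(X)$, so a telescoping across the interpolation $X^{[0]} = X', X^{[1]}, \ldots, X^{[N]} = X$ gives $f(X) - \E f(X) = \sum_{i=1}^N \E[f(X^{[i]}) - f(X^{[i-1]}) \mid X]$. Substituting into \cref{eq:5} and pulling the conditional expectation outside yields
\begin{equation*}
I = \frac{1}{a}\sum_{i=1}^N \E\bigl[h(X)F(f(X))\bigl(f(X^{[i]}) - f(X^{[i-1]})\bigr)\bigr].
\end{equation*}
The swap $X_i \leftrightarrow X_i'$ preserves the joint law, maps $X \to X^{(i)}$, and interchanges $X^{[i]}$ with $X^{[i-1]}$, so $(f(X^{[i]}) - f(X^{[i-1]}))$ flips sign. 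Antisymmetrizing therefore produces
\begin{equation*}
I = \frac{1}{2a}\sum_{i=1}^N \E\bigl[\bigl(h(X)F(f(X)) - h(X^{(i)})F(f(X^{(i)}))\bigr)\bigl(f(X^{[i]}) - f(X^{[i-1]})\bigr)\bigr].
\end{equation*}

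The next step is a first-order Taylor expansion of the inner difference. Writing $\Delta f_i = f(X) - f(X^{(i)})$ and $\Delta g_i = g(X) - g(X^{(i)})$, and using $h(X^{(i)}) = h(X) e^{-\Delta g_i}$ together with $|1 - e^{-u} - u| \leq \tfrac{u^2}{2} e^{|u|}$, I obtain
\begin{equation*}
h(X)F(f(X)) - h(X^{(i)})F(f(X^{(i)})) = h(X)F'(f(X))\Delta f_i + h(X)F(f(X))\Delta g_i + R_i,
\end{equation*}
where the remainder $R_i$ combines: (a) the $F''$ Taylor remainder of order $h(X)(\Delta f_i)^2$; (b) the second-order Taylor remainder from $\exp(-\Delta g_i)$ of order $h(X)(\Delta g_i)^2 e^{|\Delta g_i|}$; and (c) a cross term of order $h(X)|\Delta g_i||\Delta f_i|$ coming from $\Delta h \cdot (F(f(X)) - F(f(X^{(i)})))$. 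After dividing by $2a$ and summing over $i$, the two leading terms give exactly $I_1$ and $I_2$. Piece (a), once multiplied by $|f(X^{[i]}) - f(X^{[i-1]})|$ and bounded via $\|F''\|_\infty \leq 2$, lands inside the first summand of $\delta_1$.

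The main obstacle will be controlling pieces (b) and (c), which are only quadratic in $\Delta g_i, \Delta f_i$, whereas $\delta_1$ only offers cubic quantities of the form $e^{|\Delta g_i|}(\Delta g_i)^2(|\Delta g_i| + |\Delta f_i|)$. To upgrade their order, I would symmetrize each offending piece under $X_i \leftrightarrow X_i'$ a second time. Since $(\Delta g_i)^2$ is invariant under the swap while $(f(X^{[i]}) - f(X^{[i-1]}))$ is antisymmetric,
\begin{equation*}
\E\bigl[h(X)F(f(X))(\Delta g_i)^2 \bigl(f(X^{[i]}) - f(X^{[i-1]})\bigr)\bigr] = \tfrac{1}{2}\E\bigl[\bigl(h(X)F(f(X)) - h(X^{(i)})F(f(X^{(i)}))\bigr)(\Delta g_i)^2 \bigl(f(X^{[i]}) - f(X^{[i-1]})\bigr)\bigr].
\end{equation*}
Bounding the inner difference by $2 h(X) e^{|\Delta g_i|}(|\Delta g_i| + |\Delta f_i|)$, using $|h(X) - h(X^{(i)})| \leq h(X) e^{|\Delta g_i|}|\Delta g_i|$ together with $\|F\|_\infty, \|F'\|_\infty \leq 2$, produces precisely the cubic factor $e^{|\Delta g_i|}(\Delta g_i)^2(|\Delta g_i| + |\Delta f_i|)$ appearing in $\delta_1$. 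The identical maneuver applied to the cross term $h(X)\Delta g_i F'(f(X)) \Delta f_i$ introduces an additional factor $|\Delta g_i| + |\Delta f_i|$ and brings it within the budget of $\delta_1$ as well.

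Collecting the three controlled remainders and using $\|F\|_\infty, \|F'\|_\infty, \|F''\|_\infty \leq 2$ throughout gives $|I - I_1 - I_2| \leq C \delta_1$ for an absolute constant $C$, completing the proof. The non-routine ingredient, as flagged in \cref{rem:theorem1(2)}, is the second symmetrization step, which trades the simpler quadratic Taylor error for the cubic quantities in $\delta_1$ that are small enough to yield the vanishing error bound in the ERGM application.
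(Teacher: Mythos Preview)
Your proposal is essentially correct and follows the same route as the paper: telescope, antisymmetrize under $X_i\leftrightarrow X_i'$, Taylor expand, then antisymmetrize a second time on the $(\Delta g_i)^2$ remainder to manufacture the extra factor $|\Delta g_i|+|\Delta f_i|$ that $\delta_1$ demands.

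Two minor bookkeeping differences are worth flagging. First, the paper avoids your cross term (c) entirely by writing $hF-h'F'=h'(F-F')+(h-h')F$ and then swapping $X_i\leftrightarrow X_i'$ once more inside the first summand to replace $h'=h(X^{(i)})$ by $h(X)$ \emph{before} Taylor expanding; this yields $I_1+R_{1,1}$ and $I_2+R_{1,2}$ with no mixed term. In your decomposition the cross piece $-(h-h')(F-F')$ is in fact symmetric under the swap while $f(X^{[i]})-f(X^{[i-1]})$ is antisymmetric, so its contribution is exactly zero---no symmetrization maneuver is needed for it. Second, your displayed symmetrization of piece (b) treats it as if it were literally $h(X)F(f(X))(\Delta g_i)^2$, whereas the actual remainder carries the integral factor $\int_0^1(1-u)e^{-u\Delta g_i}\,du$, which is \emph{not} invariant under the swap. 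The paper therefore keeps this factor explicitly (see \cref{eq:21,eq:23}), symmetrizes the full integrand, and only then bounds the resulting difference $\exp\{g(X)+U(g(X^{(i)})-g(X))\}F(f(X))-\exp\{g(X^{(i)})+U(g(X)-g(X^{(i)}))\}F(f(X^{(i)}))$ by Taylor to pick up the extra $|\Delta g_i|+|\Delta f_i|$. Your outline can be made rigorous in exactly this way; the point is just that one must symmetrize the actual remainder, not its order of magnitude.
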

 
\begin{lemma}\label{lemma:I1}
For $I_{1}$ in \cref{eq:I1andI2}, 
\begin{equation}
   \label{eq:I1}
  \begin{aligned}
  I_{1}= b \E F'(f(Y)) + O(1)\delta_{2},
  \end{aligned}
\end{equation}
where $\delta_{2}$ is defined in \cref{eq:t2}.
\end{lemma}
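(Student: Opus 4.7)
The plan is to recognize that $I_1$, after using the tower property of conditional expectation and the change of measure \cref{eq:transfer}, is essentially the expectation of $F'(f(Y))$ weighted by the random quantity $\sum_{i=1}^N \Delta_{1,i}(Y)$. Since $b$ is defined in \cref{eq:b} as exactly the expectation of that random quantity, the approximation $I_1 \approx b\, \E F'(f(Y))$ should follow by replacing the weight by its mean, with the error controlled by $\delta_2$, which is the standard deviation of this weight.

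First, I would note that $h(X)$ and $F'(f(X))$ are functions of $X$ alone, while the randomness in $X^{(i)}$ and $X^{[i]},X^{[i-1]}$ comes from the independent copy $X'$. Conditioning on $X$ inside the summand of $I_1$, the factor $\frac{1}{2}\E\bigl[(f(X)-f(X^{(i)}))(f(X^{[i]})-f(X^{[i-1]}))\bigm|X\bigr]$ equals $\Delta_{1,i}(X)$ by \cref{eq:t4}, so
\begin{equation*}
I_1 = \frac{1}{a}\,\E\!\left[h(X)\,F'(f(X))\,\sum_{i=1}^N \Delta_{1,i}(X)\right].
\end{equation*}
Applying the identity \cref{eq:transfer} with $G(x)=F'(f(x))\sum_{i=1}^N \Delta_{1,i}(x)$ then rewrites this as
\begin{equation*}
I_1 = \E\!\left[F'(f(Y))\sum_{i=1}^N \Delta_{1,i}(Y)\right].
\end{equation*}

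Next, I would add and subtract $b$ inside the bracket and use the Stein-solution bound $\|F'\|_\infty\le 2$ from \cref{eq:solproperty} to get
\begin{equation*}
\bigl|I_1 - b\,\E F'(f(Y))\bigr| \le 2\,\E\!\left|\sum_{i=1}^N \Delta_{1,i}(Y) - b\right| \le 2\sqrt{\Var\!\left(\sum_{i=1}^N \Delta_{1,i}(Y)\right)} = 2\delta_2,
\end{equation*}
where the last step is the $L^1\!\le\! L^2$ inequality (Cauchy--Schwarz) and the definitions \cref{eq:b} and \cref{eq:t2}. This yields exactly \cref{eq:I1}.

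I do not anticipate a substantive obstacle here: the argument is a clean two-line manipulation once one identifies the conditional-expectation structure hidden in $I_1$ and invokes the change of measure \cref{eq:transfer}. The only point requiring a bit of care is the bookkeeping to ensure that $\Delta_{1,i}(X)$ appears correctly (i.e., that the conditioning is only on $X$, not on $X'$), and that the factor $\frac{1}{2}$ in the definition of $\Delta_{1,i}$ matches the $\frac{1}{2a}$ prefactor in the definition of $I_1$. The heavier work — bounding the ``residual'' terms coming from the nonlinear exponential tilt — is not in this lemma but in \cref{lemma:I} and in the separate treatment of $\delta_3$, where $g(X)-g(X^{(i)})$ must be controlled.
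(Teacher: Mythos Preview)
Your proposal is correct and follows essentially the same approach as the paper: condition on $X$ to produce $\sum_i \Delta_{1,i}(X)$, convert to the $Y$-measure via \cref{eq:transfer}, then center by $b$ and bound the remainder using $\|F'\|_\infty$ and the $L^1\le L^2$ inequality to obtain $O(1)\delta_2$. The only cosmetic difference is that the paper writes the centering and the change of measure simultaneously (defining the remainder as $R_2$), whereas you apply \cref{eq:transfer} first and then center; the content is identical.
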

\begin{lemma}\label{lemma:I2+II}
For $II$ in \cref{eq:6} and $I_{2}$ in \cref{eq:I1andI2}, 
\begin{equation}
   \label{eq:I2+II}
  \begin{aligned}
  I_{2}+II = O(1)(\delta_{1}+\delta_{3}) + (1-b) \E f(Y) F(f(Y)),
  \end{aligned}
\end{equation}
where $b$, $\delta_{1}$ and $\delta_{3}$ are defined in \cref{eq:b,eq:t1,eq:t3} respectively.
\end{lemma}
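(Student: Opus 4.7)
The plan is to apply the transfer formula \cref{eq:transfer} to rewrite $I_2$ as an expectation under $Y$, carry out a Stein-type decomposition against the target $(1-b)f(Y)$, bound the centered deviation by $\delta_3$ via Cauchy--Schwarz, and then exploit the constraint $\E f(Y)=0$ together with a symmetry argument to show that the remaining constant cancels $II$ up to an $O(\delta_1)$ error.

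First, \cref{eq:transfer} gives $I_2 = \E[F(f(Y)) \sum_{i=1}^N \Delta_{2,i}(Y)]$. Decompose
$$\sum_{i=1}^N \Delta_{2,i}(Y) = (1-b)f(Y) + \wt{R} + \E\!\sum_{i=1}^N \Delta_{2,i}(Y),$$
with $\wt{R} := \sum_i \Delta_{2,i}(Y) - (1-b)f(Y) - \E\sum_i \Delta_{2,i}(Y)$. Since $\E f(Y)=0$, one has $\E \wt{R}=0$ and $\Var(\wt{R}) = \delta_3^2$. Using $\|F\|_\infty \leq 2$ and Cauchy--Schwarz, $|\E[F(f(Y)) \wt{R}]| \leq 2\delta_3$. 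Adding $II = \E F(f(Y))\cdot \E f(X)$ gives
$$I_2 + II = (1-b)\E[f(Y) F(f(Y))] + O(\delta_3) + \E F(f(Y)) \cdot \Big(\E f(X) + \E\!\sum_i \Delta_{2,i}(Y)\Big),$$
so it remains to show that the bracketed factor is of order $\delta_1$.

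Since $\E f(Y) = 0$ is the same as $\E[h(X) f(X)] = 0$, one has $-a\,\E f(X) = \Cov(h(X), f(X)) = \sum_i \E[(h(X^{[i]}) - h(X^{[i-1]})) f(X)]$, by telescoping $h(X) - h(X') = \sum_i (h(X^{[i]}) - h(X^{[i-1]}))$ and using independence of $X'$ from $X$. The symmetry $\sigma_i$ that swaps $X_i \leftrightarrow X_i'$ (under which $X \mapsto X^{(i)}$ and $X^{[i]}\leftrightarrow X^{[i-1]}$) yields $-a\,\E f(X) = \tfrac{1}{2}\sum_i \E[(h(X^{[i]}) - h(X^{[i-1]}))(f(X) - f(X^{(i)}))]$, while the transfer formula and the definition of $\Delta_{2,i}$ give $a\cdot \E\sum_i \Delta_{2,i}(Y) = \tfrac{1}{2}\sum_i \E[h(X)(g(X)-g(X^{(i)}))(f(X^{[i]})-f(X^{[i-1]}))]$. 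Summing these two identities produces
$$a\Big(\E f(X) + \E\!\sum_i \Delta_{2,i}(Y)\Big) = \tfrac{1}{2}\sum_i \E\big[h(X)(g(X)-g(X^{(i)}))(f(X^{[i]})-f(X^{[i-1]})) - (h(X^{[i]}) - h(X^{[i-1]}))(f(X) - f(X^{(i)}))\big].$$

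The main obstacle is bounding this difference by $O(a\,\delta_1)$. The plan is to use $h(X) = h(X^{(i)})e^{g(X)-g(X^{(i)})}$ together with $h(X^{[i]}) - h(X^{[i-1]}) = h(X^{[i-1]})(e^{g(X^{[i]})-g(X^{[i-1]})}-1)$, and then to Taylor expand $e^z - 1 = z + O(z^2 e^{|z|})$. The linear-in-$g$-increment leading contributions of the two summands are both of the form $h\cdot(g\text{-increment})\cdot(f\text{-increment})$ and, after a second application of $\sigma_i$ that aligns their backgrounds, cancel exactly. The residues are bounded by $h(X)(g(X)-g(X^{(i)}))^2 e^{|g(X)-g(X^{(i)})|}$ multiplied by $|f(X^{[i]})-f(X^{[i-1]})|$ or $|f(X)-f(X^{(i)})|$. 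A final $\sigma_i$-symmetrization (as in \cref{rem:theorem1(2)}) applied to these quadratic-in-$g$ terms replaces one $h(X)$ factor by $h(X) - h(X^{(i)}) = h(X)(1 - e^{-(g(X)-g(X^{(i)}))})$, producing the crucial additional factor $|g(X)-g(X^{(i)})|\, e^{|g(X)-g(X^{(i)})|}$ that reduces the residues to exactly the second summand of $\delta_1$ in \cref{eq:t1}. Dividing by $a$ and using $\|F\|_\infty\leq 2$ then gives the required $O(\delta_1)$ bound and completes the proof.
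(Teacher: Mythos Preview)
Your decomposition up to the identity
\[
a\Big(\E f(X) + \E\!\sum_i \Delta_{2,i}(Y)\Big) = \tfrac{1}{2}\sum_i \E\big[h(X)(g(X)-g(X^{(i)}))(f(X^{[i]})-f(X^{[i-1]}))\big] - \tfrac{1}{2}\sum_i \E\big[(h(X^{[i]}) - h(X^{[i-1]}))(f(X) - f(X^{(i)}))\big]
\]
is correct, but the subsequent cancellation argument has a gap. You claim that after Taylor expansion the two linear-in-$g$ leading contributions ``cancel exactly'' once you apply $\sigma_i$ to ``align their backgrounds''. They do not: $\sigma_i$ swaps $X_i\leftrightarrow X_i'$ only, so it preserves the non-$i$ coordinates. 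The leading piece of the first summand is $h(X)(g(X)-g(X^{(i)}))(f(X^{[i]})-f(X^{[i-1]}))$, evaluated on the background $(X_j)_{j\neq i}$, whereas the leading piece of the second is $h(X^{[i-1]})(g(X^{[i]})-g(X^{[i-1]}))(f(X)-f(X^{(i)}))$, evaluated on the mixed background $(X_1,\dots,X_{i-1},X_{i+1}',\dots,X_N')$. No number of applications of $\sigma_i$ will match these two backgrounds, so the asserted exact cancellation is unjustified. (Relatedly, the $(A)-(B)$ type residue you would obtain is already $\sigma_i$-symmetric, so the ``final $\sigma_i$-symmetrization'' you describe cannot gain the extra $|g(X)-g(X^{(i)})|$ factor there either.)

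The paper avoids this difficulty by telescoping $f$ rather than $h$: writing $\E f(X)=-\frac{1}{a}\E[h(X)(f(X)-f(X'))]$, then telescoping $f(X)-f(X')=\sum_i(f(X^{[i]})-f(X^{[i-1]}))$ and applying $\sigma_i$, one obtains directly
\[
\E f(X)=-\tfrac{1}{2a}\sum_i \E\big[(h(X)-h(X^{(i)}))(f(X^{[i]})-f(X^{[i-1]}))\big].
\]
Now a single Taylor expansion of $h(X)-h(X^{(i)})$ produces as leading term \emph{exactly} $-\E\sum_i\Delta_{2,i}(Y)$, with remainder $R_4$ structurally identical to $R_{1,2}$ in \cref{eq:21}, hence bounded by $\delta_1$ via the same $\sigma_i$-symmetrization used in \cref{eq:23}. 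Equivalently, you can salvage your own identity in one stroke by applying the measure-preserving map that swaps $X_j\leftrightarrow X_j'$ for all $j>i$ (under which $X\leftrightarrow X^{[i]}$ and $X^{(i)}\leftrightarrow X^{[i-1]}$); this turns your second summand into the paper's, and you then proceed as above. Either way, the key missing ingredient is this background-swapping symmetry, not $\sigma_i$.
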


Now, by \cref{lemma:I,lemma:I1,lemma:I2+II}, we obtain
\besn{\label{eq:outline}
&\E [W F(W)]-\E F'(W)=\E [f(Y) F(f(Y))]-\E F'(f(Y))\\
=&I+II-\E F'(f(Y))\\
= & I_1+I_2+II-\E F'(f(Y))+O(1)\delta_1\\
= & (1-b) \{\E[W F(W)]-\E F'(W)\}+O(1)(\delta_1+\delta_2+\delta_3).
}
Solving the recursive equation for $\E[W F(W)]-\E F'(W)$ leads to \cref{eq:3}.
It now remains to prove \cref{lemma:I,lemma:I1,lemma:I2+II}.

\bigskip

We proceed by adapting the approach of \cite{chatterjee2008new} in Stein's method (see \cite{chen2010stein} for the variation of the approach that we use), who only considered functionals of independent random variables. 
\begin{proof}[Proof of \cref{lemma:I}]

Recall that $X'=(X_1',\dots, X_n')$ is an independent copy of $X=(X_1,\dots, X_n)$. 
We will use the fact that if $G: \IR^2\to \IR$ satisfies $G(x, x')=G(x', x), \forall\ x, x'\in \IR$ and $\E |G(X_i, X_i')|<\infty$, then
\ben{\label{eq:symmetry}
\E G(X_i, X_i')=\E G(X_i', X_i)=\frac{1}{2} \E [G(X_i, X_i')+G(X_i',X_i)].
}
Recall the definitions of $X^{[i]}$ and $X^{(i)}$ in \cref{eq:X[i]} and \cref{eq:X(i)}, respectively, and $a=\mathbb{E} h(X)$. 

For $I$, using the telescoping sum in the first equation and \cref{eq:symmetry} in the second and fourth equations, we obtain
  \begin{align}
   \label{eq:7}
 I=&\frac{1}{a} \E \Big[ h(X) F (f(X)) \sum_{ i=1 }^{ N } (f(X^{[i]})-f(X^{[i-1]}))\Big]\nonumber \\
    =&\frac{1}{2a} \sum_{ i=1 }^{ N }\E \Big[ \{h(X) F (f(X))-h(X^{(i)}) F (f(X^{(i)}))\}( f(X^{[i]})-f(X^{[i-1]})) \Big]\nonumber \\
    =& \frac{1}{2a} \sum_{ i=1 }^{ N }\E  \Big[ h(X^{(i)})  \{F (f(X))-F(f(X^{(i)}))\}( f(X^{[i]})-f(X^{[i-1]})) \Big]\nonumber \\
    &+ \frac{1}{2a} \sum_{ i=1 }^{ N }\E \Big[ \{h(X) -h(X^{(i)})\} F (f(X))( f(X^{[i]})-f(X^{[i-1]}))\Big] \nonumber \\
    =& \frac{1}{2a} \sum_{ i=1 }^{ N }\E  \Big[h(X)  \big\{ F (f(X^{(i)}))-F(f(X))\big\}( f(X^{[i-1]})-f(X^{[i]}))\Big]\nonumber  \\
    &+ \frac{1}{2a} \sum_{ i=1 }^{ N }\E  \Big[\{h(X) -h(X^{(i)})\} F (f(X))( f(X^{[i]})-f(X^{[i-1]}))\Big] \nonumber \\
=:&I_{1}+I_{2}+R_{1,1}+R_{1,2},  
\end{align}
where, using Taylor's expansion (recall $h(x)=\exp\{g(x)\}$),
\begin{equation}
   \label{eq:20}
  \begin{aligned}
    R_{1,1}&=- \frac{1}{2a} \sum_{ i=1 }^{ N }  \E \Big[ h(X)F''\left(f(X)+U(f(X^{(i)})-f(X))\right)\{f(X)-f(X^{(i)})\}^{2}
    \\&\qquad\qquad \qquad \qquad\times\{f(X^{[i]})-f(X^{[i-1]})\}(1-U)\Big],
  \end{aligned}
\end{equation}
$U$ is a uniform random variable in $[0,1]$ independent of any other random variables, and 
\begin{equation}
   \label{eq:21}
  \begin{aligned}
    R_{1,2}&=-\frac{1}{2a} \sum_{ i=1 }^{ N } \E \Big[ \exp\big\{  g(X)+U(g(X^{(i)})-g(X))\big\}F\left(f(X)\right)\{g(X)-g(X^{(i)})\}^{2}
    \\&\qquad\qquad \qquad \qquad \times\{f(X^{[i]})-f(X^{[i-1]})\}(1-U) \Big].
  \end{aligned}
\end{equation}
For $R_{1,1}$ in \cref{eq:20}, 
we have
\ben{\label{eq:22}
R_{1,1}=O(1)\|F''\|_\infty \frac{1}{a} \sum_{i=1}^N \E \Big\{h(X)(f(X)-f(X^{(i)}))^2 |f(X^{[i]})-f(X^{[i-1]})|   \Big\},
}
where $O(1)$ denotes a constant such that $|O(1)|\leq C$.

For $R_{1,2}$ in \cref{eq:21}, by \cref{eq:symmetry} and Taylor's expansion, 
\begin{equation}
   \label{eq:23}
  \begin{aligned}
  R_{1,2}= & -\frac{1}{4a}\sum_{i=1}^N \E\Big\{\Big[ \exp\big(g(X)+U(g(X^{(i)})-g(X)) \big) F(f(X))\\
  &\qquad\qquad \qquad \qquad-\exp\big( g(X^{(i)})+U(g(X)-g(X^{(i)}))\big)F(f(X^{(i)}))\Big]\\
  &\qquad\qquad  \qquad\times(g(X)-g(X^{(i)}))^2 (f(X^{[i]})-f(X^{[i-1]}))(1-U) \Big\}\\
  =&O(1) \| F \|_{\infty} \frac{1}{a}\sum_{i=1}^N\E \Big\{  h(X)  \exp\big[ \xi(g(X)- g(X^{(i)})) \big]\big\lvert g(X)- g(X^{(i)})\big\rvert^{3} 
       \big\lvert f(X^{[i]})-f(X^{[i-1]})\big\rvert\Big\}\\
       &\quad +O(1) \| F' \|_{\infty} \frac{1}{a}\sum_{i=1}^N\E\Big\{   h(X)  \exp\big[ \xi'(g(X)- g(X^{(i)})) \big] \big\lvert g(X)- g(X^{(i)})\big\rvert^{2}\\&\qquad \quad \qquad \qquad \qquad \times\big\lvert f(X)-f(X^{(i)})\big\rvert
       \big\lvert f(X^{[i]})-f(X^{[i-1]})\big\rvert \Big\},
  \end{aligned}
\end{equation}
where $\xi$ and $\xi'$ are random variables supported on $[-1,0]$. We remark that the above symmetry argument is crucial to have a vanishing error for the application to ERGM. See \cite{fang2021high,fang2022new} for applications of this symmetry argument in multivariate normal approximations.

From \cref{eq:22,eq:23}, we have (recall the property of $F$ in \cref{eq:solproperty}) 
\begin{equation}
   \label{eq:24}
  \begin{aligned}
  |R_{1,1}|+|R_{1,2}|\leq C\delta_1 \ \text{(recall \cref{eq:t1})}.
  \end{aligned}
\end{equation}
\end{proof}

\begin{proof}[Proof of \cref{lemma:I1}]
For $I_{1}$, taking conditional expectation given $X$, using the definition of $\Delta_{1,i}(\cdot)$ in \cref{eq:t4}, and using an appropriate centering (recall \cref{eq:transfer}),
\begin{equation}
   \label{eq:10}
  \begin{aligned}
    I_{1}=&\frac{1}{a} \E\left[  h(X)F'(f(X)) \left\{\sum_{ i=1 }^{ N } \Delta_{1,i}(Y)-b\right\}\right] + b\E F'(f(Y))\\
          =:& R_{2}+  b\E F'(f(Y))
    \end{aligned}
\end{equation}
where (recall \cref{eq:b}, \cref{eq:t4} and \cref{eq:transfer})
\be{
b=\E\left[ \sum_{i=1}^N \Delta_{1,i}(Y)\right] \ne 0,
}
\begin{equation}
   \label{eq:11}
  \begin{aligned}
    R_{2} &= O(1) \| F' \|_{\infty}\frac{1}{a}  \E\left[  h(X) \left\lvert\frac{1}{2}\sum_{ i=1 }^{ N } \E \left[ \left(f(X) -f(X^{(i)})\right)
    \left(f(X^{[i]})-f(X^{[i-1]})\right)\bigg| X\right]-b\right\rvert\right]\\
    &= O(1) \| F' \|_{\infty} \frac{1}{a}  \E \left[ h(X) \left\lvert \sum_{ i=1 }^{ N } \Delta_{1,i}(X)-b\right\rvert\right]\\
    &= O(1) \| F' \|_{\infty}   \E  \left\lvert \sum_{ i=1 }^{ N } \Delta_{1,i}(Y)-b\right\rvert=O(1) \| F' \|_{\infty}\sqrt{\Var(\sum_{ i=1 }^{ N } \Delta_{1,i}(Y) )}.
  \end{aligned}
\end{equation}
Combining \cref{eq:solproperty,eq:11}, we obtain $R_{2}= O(1) \delta_{2}$.
\end{proof}

\begin{proof}[Proof of \cref{lemma:I2+II}]
For $I_{2}$, taking conditional expectation with respect to $X$ and recalling \cref{eq:t5}, we obtain
\begin{equation}
   \label{eq:12}
  \begin{aligned}
    I_{2}=&\frac{1}{2a} \sum_{ i=1 }^{ N } \E\left[  h(X) F(f(X)) \left(g(X)- g(X^{(i)})\right)
    \left(f(X^{[i]})-f(X^{[i-1]})\right)        \right]
       \\=& \frac{1}{a} \sum_{ i=1 }^{ N } \E \left[ h(X) F(f(X)) \Delta_{2,i}(X)\right].
  \end{aligned}
\end{equation}

Next, to deal with $II=[\E F(f(Y))][\E f(X)]$ in \cref{eq:6}, we consider $\E f(X)$. Recalling $\E f(Y)=0$ and using \cref{eq:symmetry} in the fourth equation below, we have
\ban{
    \E f(X)=&\E f(X') =-\frac{1}{a} \E\left[ h(X) (f(X)-\E f(X'))\right]\nonumber \\
    =&-\frac{1}{a} \E \left[h(X) \sum_{ i=1 }^{ N } \left(f(X^{[i]}) - f(X^{[i-1]})\right)\right]\nonumber \\
    =&-\frac{1}{2a}   \sum_{ i=1 }^{ N } \E \left[\left(h(X)-h(X^{(i)})\right) \left(f(X^{[i]}) - f(X^{[i-1]})\right)\right]\nonumber \\
    =&-\frac{1}{2a}   \sum_{ i=1 }^{ N } \E\left[ h(X) \left(g(X)-g(X^{(i)})\right) \left(f(X^{[i]}) - f(X^{[i-1]})\right)\right]\nonumber \\
     &+\frac{1}{2a}   \sum_{ i=1 }^{ N } \E\left[ h(X) e^{\left\{ U( g(X^{(i)})-g(X) \right\}} \left(g(X)-g(X^{(i)})\right)^{2} \left(f(X^{[i]}) - f(X^{[i-1]})\right)(1-U)\right]\nonumber \\
     =&- \frac{1}{a}\sum_{ i=1 }^{ N }\E \left[h(X) \Delta_{2,i}(X)\right]+R_{4},\label{eq:13}
     }
where $U$ is a uniform random variable in $[0,1]$ independent of any other random variables and
\begin{equation}
   \label{eq:15}
  \begin{aligned}
  R_{4} = \frac{1}{2a}   \sum_{ i=1 }^{ N } \E \left[ h(X) e^{\left\{ U( g(X^{(i)})-g(X)) \right\}} \left(g(X)-g(X^{(i)})\right)^{2} \left(f(X^{[i]}) - f(X^{[i-1]})\right)(1-U)\right].
  \end{aligned}
\end{equation}

From \cref{eq:12,eq:13}, it follows that 
\begin{equation}
   \label{eq:14}
  \begin{aligned}
    I_{2}+II=& \frac{1}{a}  \E  \left[h(X) F(f(X)) \sum_{ i=1 }^{ N } \left(\Delta_{2,i}(X) - \frac{1}{a} \E h(X) \Delta_{2,i}(X)\right)\right]+ R_4\E F(f(Y)).  
    \end{aligned}
\end{equation}
Similar to bounding $R_{1,2}$, $R_4\E F(f(Y))$ can be bounded by $\delta_1$ defined in \cref{eq:t1}. 

Let
\begin{equation}
   \label{eq:16}
  \begin{aligned}
  R_{3}=   \E \left\{ F(f(Y)) \left[ \sum_{ i=1 }^{ N } \left(\Delta_{2,i}(Y) -  \E  \Delta_{2,i}(Y)\right)-(1-b) f(Y)\right]\right\}.
  \end{aligned}
\end{equation}
From the boundedness of $F$ from \cref{eq:solproperty} and recalling $
\E f(Y)=0$, we obtain $|R_3|\leq \delta_3$ (recall \cref{eq:t3}). 
\end{proof}


\bigskip

\noindent\textbf{Kolmogorov bound.} Next, we modify the above proof to obtain the bound \cref{eq:ergmdk} on the Kolmogorov distance. 
The main change is dealing with $I_1+R_{1,1}$.
We let $F_{x}$ be the bounded solution to the Stein equation
\begin{equation}
   \label{eq:10001}
  \begin{aligned}
  wF(w)-F'(w) = 1_{\{ w\leq x \}} -\P(Z\leq x),\quad \forall\ w\in \IR,
  \end{aligned}
\end{equation}
where $1_{\{\cdot\}}$ is the indicator function.
It is known that for any $x\in \mathbb{R}$, $F_{x}$ satisfies 
\begin{equation}
   \label{eq:10002}
  \begin{aligned}
  \| F_{x} \|_{\infty}, \|F'_{x}\|_{\infty}\leq 1.
  \end{aligned}
\end{equation}
See, for example, \cite[Lemma~2.3]{chen2010normal}. Compared with \cref{eq:solproperty}, the solution of \cref{eq:10001}, $F_{x}$, does not have a bounded second derivative. 
The second derivative was needed to control $R_{1,1}$ in \cref{eq:20}.
Therefore, in the proof of the Kolmogorov bound, we need to deal with $I_{1}+R_{1,1}$ in another way and the other terms in the above proof of the Wasserstein bound remain unchanged. We will adapt the method of \cite{shaozhang2019} to avoid $F_x''$ and prove that 
\begin{equation}
   \label{eq:10003}
  \begin{aligned}
  I_{1}+R_{1,1} = b \E F'_{x}(f(Y)) + O(1)\delta_{1}'.
  \end{aligned}
\end{equation}
The desired Kolmogorov bound then follows.

It remains to prove \cref{eq:10003}. In the remaining proof, we fixed $x\in \IR$ and write $F:=F_x$ for simplicity. Let $$D^{(i)}=D^{(i)}(X,X')= f(X)-f(X^{(i)}),$$ and 
\[
  D^{[i]}=D^{[i]}(X,X') = f(X^{[i-1]})-f(X^{[i]}).
\]
We now consider (recall \cref{eq:7}) 
\be{
I_{1}+R_{1,1}=\frac{1}{2a} \sum_{ i=1 }^{ N }\E  \Big[h(X)  \big\{ F (f(X^{(i)}))-F(f(X)\big\}( f(X^{[i-1]})-f(X^{[i]}))\Big].
}
By Taylor's expansion and recalling the definition of $b$ in \cref{eq:b}, we have
\ban{
   \label{eq:10004}
    I_{1}+R_{1,1}=& \frac{1}{2a} \sum_{ i=1 }^{ N }\E  \Big[h(X) D^{[i]} \big\{ F (f(X^{(i)}))-F(f(X))\big\}\Big]\nonumber\\
  =& - \frac{1}{2a} \sum_{ i=1 }^{ N }\E  \left[h(X) D^{[i]} D^{(i)}  F' \left(f(X)-U D^{(i)}\right)\right]\nonumber \\ 
  =& -\frac{1}{2a} \sum_{ i=1 }^{ N }\E  \left[h(X) D^{[i]} D^{(i)} \left\{ F' \left(f(X)-U D^{(i)}\right)- F'(f(X))\right\}\right] \nonumber\\ 
   &+ \frac{1}{a} \E  \left[h(X) F'(f(X)) \left(\sum_{ i=1 }^{ N } \frac{1}{2}\E [(-D^{[i]}) D^{(i)}|X] -b\right) \right]\nonumber\\
   &+  b\E [F'(f(Y))]\nonumber\\
   =:&H_{1}+H_{2}+b\E [F'(f(Y))],
}
where $U$ is a uniform random variable in $[0, 1]$ independent of any other random variables. 
Recalling the definitions of $D^{[i]}$ and $D^{(i)}$, and from \cref{eq:10,eq:11}, we have
\begin{equation}
   \label{eq:10005}
  \begin{aligned}
  H_{2}= O(1) \delta_{2}.
  \end{aligned}
\end{equation}
It remains to consider $H_{1}$. By \cref{eq:10001}, 
\begin{equation}
   \label{eq:10006}
  \begin{aligned}
    H_{1}=&  \frac{1}{2a} \sum_{ i=1 }^{ N }\E  \left[h(X) D^{[i]} D^{(i)} \left\{ 1_{\{f(X)-U D^{(i)}\leq x\}}-1_{\{f(X)\leq x\}} \right\}\right]\\
        &- \frac{1}{2a} \sum_{ i=1 }^{ N }\E  \left[h(X) D^{[i]} D^{(i)} \left\{ (f(X)-U D^{(i)})F(f(X)-U D^{(i)})- f(X)F(f(X))\right\}\right]\\
        =:&H_{1,1}+H_{1,2}.\\
  \end{aligned}
\end{equation}
For $H_{1,1}$, we use the idea from \cite[proof of Theorem~2.1]{shaozhang2019}. Because $1_{\{ w\leq x \}}$ is decreasing w.r.t. $w$, we have 
\begin{equation*}
   \label{eq:10007}
  \begin{aligned}
 0\leq D^{(i)}\left\{ 1_{\{f(X)-U D^{(i)}\leq x\}}-1_{\{f(X)\leq x\}} \right\}\leq D^{(i)}\left\{ 1_{\{f(X^{(i)})\leq x\}}-1_{\{f(X)\leq x\}} \right\}.
  \end{aligned}
\end{equation*}
Thus, recalling the definition of $D^{*}_{i}$ in \cref{eq:Dstar} and using the symmetry \cref{eq:symmetry},  
\ban{
   \label{eq:10008}
    |H_{1,1}|\leq& \frac{1}{2a} \sum_{ i=1 }^{ N }\E  \left[h(X) D^{*}_{i} D^{(i)} \left\{ 1_{\{f(X^{(i)})\leq x\}}-1_{\{f(X)\leq x\}} \right\}\right]\nonumber\\
     =& - \frac{1}{2a} \sum_{ i=1 }^{ N }\E  \left[(h(X)+h(X^{(i)})) D^{*}_{i} D^{(i)}  1_{\{f(X)\leq x\}}\right]\nonumber\\
    =& - \frac{1}{a} \sum_{ i=1 }^{ N }\E  \left[h(X) D^{*}_{i} D^{(i)}  1_{\{f(X)\leq x\}}\right]\nonumber\\
    & + \frac{1}{2a} \sum_{ i=1 }^{ N }\E  \left[(h(X)-h(X^{(i)})) D^{*}_{i} D^{(i)}  1_{\{f(X)\leq x\}} \right]\nonumber\\
    =& O(1) \frac{1}{a} \E  \left[h(X) \left\lvert \sum_{ i=1 }^{ N } \E [D^{*}_{i} D^{(i)} |X] \right\rvert \right]\nonumber\\
     &+ O(1) \frac{1}{a} \sum_{ i=1 }^{ N }\E  \left[h(X) \exp\left\{ \lvert g(X)-g(X^{(i)}) \rvert \right\} D^{*}_{i} |D^{(i)}| \lvert g(X)-g(X^{(i)}) \rvert\right]\nonumber\\
    =& O(1) \delta_{1}'.
}
For $H_{1,2}$, by a similar argument using the boundedness and monotonicity of $wF(w)$ (see, for example, \cite[Lemma~2.3]{chen2010normal}), we have
\begin{equation}
   \label{eq:100010}
  \begin{aligned}
  H_{1,2}= O(1) \delta_{1}'.
  \end{aligned}
\end{equation} 
Thus, by \cref{eq:10008,eq:100010}, we obtain 
\begin{equation}
   \label{eq:100011}
  \begin{aligned}
  H_{1}=O(1) \delta_{1}'.
  \end{aligned}
\end{equation}
Finally, combining \cref{eq:10004,eq:10005,eq:100011}, we complete the proof for \cref{eq:10003}.
\end{proof}

\section{Proof of Theorem \ref{thm:ERGM}}\label{sec:proofERGM}
In this section, we prove \cref{thm:ERGM} using \cref{theorem:1}. 
Let  $C:=C(\beta, H)$ denote positive constants depending only on the parameters $\beta_1,\dots, \beta_k$ and the subgraphs $H_1,\dots, H_k$ in the definition of the ERGM and may differ from line to line. Let $O(1)$ denote constants satisfying $|O(1)|\leq C$. For $j=1,\dots, k$, let $v_j$ and $e_j$ denote the number of vertices and edges of the graph $H_j$, respectively, in the ERGM \cref{ERGM}.

Our proof proceeds as follows. First, in the subcritical region, an ERGM is
close to the Erd\H{o}s--R\'enyi random graph $G(n,p)$ and we write the ERGM as a perturbation of $G(n, p)$ using the Hoeffding decomposition (cf. \cref{eq:perturbation}). This step corresponds to the "centering" step in classical exponential change of measure arguments. Then, we apply the general CLT for nonlinear exponential families developed in \cref{sec:cltexp}.
Finally, we control various error terms in the general CLT for the application to ERGM. In particular, we use higher-order concentration inequalities for weakly dependent random variables.
In Dobrushin's uniqueness region, such higher-order concentration inequalities were developed by \cite{sambale2020logarithmic} using a logarithmic Sobolev inequality (see also an earlier work by \cite{gotze2019higher}). We extend such higher-order concentration inequalities to the whole subcritical region using the approach of \cite{chatterjee2007stein} and \cite{ganguly2019sub}.


We first prove the result in \cref{eq:thmdob} in Dobrushin's uniqueness region where we use \cref{eq:SS20}. In Step 7, we prove the result \cref{eq:thmsub} for the whole subcritical region.

\medskip

\noindent\textbf{Step
1: Centering.}
Rewrite the ERGM \cref{ERGM} as
\be{
p_\beta(G)\propto \exp\{\sum_{j=1}^k [\frac{\beta_j}{n^{v_j-2}}|\text{Hom}(H_j, G)|-2\beta_j e_j p^{e_j-1}E]\} \exp (2\sum_{j=1}^k \beta_j e_j p^{e_j-1}E),
}
where $E$ denotes the number of edges in $G$.
Because $p$ satisfies \cref{eq:p}, we have
\ben{\label{eq:perturbation}
p_\beta(G)\propto \exp\{\sum_{j=1}^k [\frac{\beta_j}{n^{v_j-2}}|\text{Hom}(H_j, G)|-2\beta_j e_j p^{e_j-1}E]\} p^E (1-p)^{N-E}.
}
The motivation for this rewriting is that in order to have a smaller variance, we have subtracted inside the brackets $[\cdots]$ the leading term in the Hoeffding decomposition (\cite{hoeffding1948}) under $G(n,p)$. This way, we can view the ERGM \cref{ERGM} as a perturbation of $G(n,p)$. This step corresponds to the ``centering" step in classical exponential change of measure arguments. See \cite{ding2024second} for another application of this observation. Technically, such a centering is useful in bounding, for example, $\delta_1$ in \cref{eq:del1.2}.

\medskip

\noindent\textbf{Step 2: Formulation of a nonlinear exponential family.}
Recall that we identify a simple graph on $n$ labeled vertices $\{1,\dots, n\}$ with an element $x=(x_{ij})_{1\leq i<j\leq n}\in \{0,1\}^{\mcl{I}}$, where ${\mcl{I}}:={\mcl{I}_n}:=\{s=(i,j):1\leq i<j\leq n\}$ and $x_{ij}=1$ if and only if there is an edge between vertices $i$ and $j$. In this way, the ERGM \eq{ERGM} induces a random element $Y\in \{0,1\}^{\mcl{I}}$. Similarly, $G(n,p)$ induces a random element $X$ in $\{0,1\}^{\mcl{I}}$. 
This puts the problem into the framework of nonlinear exponential families considered in \cref{theorem:1}, with
$X=\{X_{s}: s\in \mcl{I}\}$ being i.i.d.\ $Bernoulli(p)$, $N={n \choose 2}$, and $Y=\{Y_{s}: s\in \mcl{I}\}$ following \cref{eq:1} with
\ben{\label{eq:hg}
h(x)=\exp(g(x))=\exp\{\sum_{j=1}^k [\frac{\beta_j}{n^{v_j-2}}|\text{Hom}(H_j, G)|-2\beta_j e_j p^{e_j-1}E]\}.
}
Let 
\be{
f(x)=\frac{\sum_{1\leq i<j\leq n}x_{ij}-\mu_n}{\sigma_n},
}
and let 
\ben{\label{eq:centeredsum}
W=W_n=f(Y)=\frac{\sum_{1\leq i<j\leq n} Y_{ij}-\mu_n}{\sigma_n},
}
where $\mu_n=\E \sum_{1\leq i<j\leq n} Y_{ij}$, and $\sigma_n\asymp n$ as in \cref{eq:sigmasq}.

\medskip

\noindent\textbf{Step 3: Computing $b$, $\Delta_{1,s}$ and $\Delta_{2,s}$.}
We change the index $i\in \{1,\dots, N\}$ in \cref{theorem:1} to $s\in \mcl{I}$ in this proof because $i$ is used as vertex index.
We can compute (from \cref{eq:t4}, \cref{eq:centeredsum}, $X_s^2=X_s$ and $X_s'$ is an independent copy of $X_s$)
\ben{\label{eq:Delta1.1}
\Delta_{1,s}(X)=\frac{1}{2\sigma_n^2}\E[(X_s-X_s')^2\vert X]=\frac{1}{2\sigma_n^2} [(1-2p)X_s+p].
}
From \cref{eq:b} and \cref{eq:LLN}, we have
\ben{\label{eq:basym}
b=\frac{1}{2\sigma_n^2} \sum_{s\in \mcl{I}}\E[(1-2p)Y_s+p]=\frac{N}{2\sigma_n^2} 2p(1-p)+O(\frac{1}{\sqrt{n}}).
}
From \cref{eq:t5,eq:centeredsum,eq:hg}, we have
\besn{\label{eq:Delta2.1}
&\sum_{s\in \mcl{I}} \Delta_{2,s}(X)\\
=&\sum_{s\in \mcl{I}} \sum_{j=2}^k \frac{\beta_j}{2n^{v_j-2}} \E\left\{\Big[|\text{Hom}(H_j, G_X)|-|\text{Hom}(H_j, G_X^{(s)})|-2n^{v_j-2} e_j p^{e_j-1} (X_s-X_s')\Big]\frac{(X_s-X_s')}{\sigma_n} \vert X \right\},
}
where $|\text{Hom}(H_j, G_X)|$ denotes the number of homomorphisms of $H$ into the random graph $G_X$ (corresponding to the edge indicators vector $X$) and the random graph $G_X^{(s)}$ differs from $G_X$ only by replacing the edge indicator $X_s$ with the independent copy $X_s'$.
Note that
\ben{\label{eq:diffhom}
|\text{Hom}(H_j, G_X)|-|\text{Hom}(H_j, G_X^{(s)})|=(X_s-X_s') |\text{Hom}(H_j, G_X, s)|,
}
where $|\text{Hom}(H_j,G_X, s)|$ denotes the number of homomorphisms of $H_j$ into $G_X$ but requiring that an edge of $H_j$ must be mapped to the edge $s$ (no matter the edge $s$ is present in $G_X$ or not).
From \cref{eq:Delta2.1,eq:diffhom} and a computation of conditional expectation as in \cref{eq:Delta1.1}, we obtain
\ben{\label{eq:del2.1}
\sum_{s\in \mcl{I}} \Delta_{2,i}(X)=\sum_{j=2}^k \frac{\beta_j}{2n^{v_j-2}\sigma_n}\sum_{s\in \mcl{I}} [(1-2p)X_{s}+p][|\text{Hom}(H_j, G_X, s)|-2n^{v_j-2}e_j p^{e_j-1}].
}
From the computation of $b$ in \cref{eq:basym} and the fact that $N\asymp \sigma_n^2$, to prove \cref{eq:thmdob} in \cref{thm:ERGM} using \cref{theorem:1}, it suffices to show $\delta_1$, $\delta_1'$, $\delta_2$ and $\delta_3$ in \cref{eq:t1,eq:t1',eq:t2,eq:t3} are all bounded by $C/\sqrt{n}$.

\medskip

\noindent\textbf{Step 4: Higher-order concentration inequalities.}
To bound error terms appearing in \cref{theorem:1} for the application to ERGM in Dobrushin's uniqueness region, we rely on the results of \cite{sambale2020logarithmic} (see also the earlier result of \cite{gotze2019higher}) on higher-order concentration inequalities. We introduce the results of \cite{sambale2020logarithmic} for our application in this step.

As in  \cite{sambale2020logarithmic}, as building blocks of the Hoeffding decomposition under the ERGM $p_\beta$ in Dobrushin's uniqueness region, we define centered random variables\footnote{\cite{sambale2020logarithmic} had $1_{\{N(P)=0\}}$ instead of $M(P) 1_{\{N(P)=0\}}$ in their definition of $f_{d, A}$. However, for $f_{d, A}$ to be centered as claimed in the first paragraph of their proof of Theorem 3.7, we need to account for multiplicity. The other parts of their proof go through with this new definition of $f_{d, A}$.}
\besn{\label{eq:hoeffp2}
    &f_{d,A}(Y):=\sum_{I\in \mcl{I}^d} A_I g_I(Y)\\
    :=&\sum_{I\in \mcl{I}^d} A_I \sum_{P\in \mcl{P}(I)}(-1)^{M(P)} \left\{ M(P)1_{\{N(P)=0\}}+\prod_{J\in P\atop |J|=1}  (Y_J-\t p)1_{\{N(P)>0\}}\right\} \prod_{J\in P\atop |J|>1} \left\{ \E \prod_{l\in J} (Y_l-\t p) \right\},
}
where $d\geq 1$ is an integer, $\mcl{I}=\{(i,j): 1\leq i<j\leq n\}$, $A$ is a $d$-tensor with vanishing diagonal (i.e., $A_I\ne 0$ only if all the $d$ elements in $I$ are distinct),
\begin{equation*}
    \mcl{P}(I)=\{S\subset 2^I: S\ \text{is a partition of}\ I\},
\end{equation*}
\be{
\t p:=\E Y_l,
}
$N(P)$ ($M(P)$, resp.) is the number of subsets with one element (more than one element, resp.) in the partition $P$ and for a singleton set $J=\{l\}$, $Y_J:=Y_l$. 
We will write $f_{d, A}:=f_{d, A}(Y)$ and $g_I:=g_I(Y)$ for simplicity of notation. 
For our purpose, it is enough to consider those values of $d$ bounded by the maximum number of edges of graphs $H_1,\dots, H_k$.
From \cite[Theorem~3.7]{sambale2020logarithmic},\footnote{They assumed that the graphs $H_1,\dots, H_k$ in the definition of ERGM are all connected at the beginning of their paper. However, as far as we checked their proof, this requirement is not needed.} in Dobrushin's uniqueness region \cref{eq:Dobrushin},
\begin{equation}\label{eq:SS20}
   \|f_{d,A}\|_p\leq C_p \|A\|_2, 
\end{equation}
where $\|\cdot\|_p$, $p\geq 1$, denotes the $L^p$-norm of a random variable, $C_p$ is a constant depending on $p$ in addition to the parameters in the ERGM, and $\|A\|_2$ is the Euclidean norm of the tensor $A$ when viewed as a vector.

Recall $\t p:=\E Y_l$ (same for all $l$ by symmetry) and let $\t Y_l:= Y_l-\t p$. For $m$ distinct indices $s_1,\dots, s_m\in \mcl{I}$, by considering $Y_{s_i}=\t Y_{s_i}+\t p$, we can write
\ben{\label{eq:decomp1}
Y_{s_1}\cdots Y_{s_m}-\E [Y_{s_1}\cdots Y_{s_m}]
=\sum_{l=1}^m \t p^{m-l} \sum_{1\leq i_1<\dots <i_l\leq m} \t Y_{s_{i_1}} \cdots \t Y_{s_{i_l}} -\text{mean},
}
where $\text{mean}$ denotes the expectation of the random variable in front of it. 

To relate \cref{eq:decomp1} to \cref{eq:hoeffp2}, we need the following result. From \cite[Eq.(34)]{ganguly2019sub}, for any fixed $m\geq 1$ and distinct edges $l_1,\dots, l_m$, we have, in the subcritical region,
\begin{equation}\label{eq:GN19}
    |\E(Y_{l_1} \vert Y_{l_2},\dots, Y_{l_m})-\E Y_{l_1}|\leq \frac{C}{n}.
\end{equation}
Completing each $\t Y_{s_{i_1}} \cdots \t Y_{s_{i_l}}$ to $g_{\{s_{i_1}, \dots, s_{i_l}\}}$ (recall \cref{eq:hoeffp2}) in \cref{eq:decomp1} and using \cref{eq:GN19}, we obtain
\be{
Y_{s_1}\cdots Y_{s_m}-\E [Y_{s_1}\cdots Y_{s_m}]= \sum_{l=1}^m \left(\t p^{m-l}+O(\frac{1}{n})\right)\sum_{1\leq i_1<\cdots<i_l\leq m} g_{\{s_{i_1},\dots, s_{i_l}\}}.
}
From $\t p=p+O(1/\sqrt{n})$ (cf. \cref{eq:LLN}), we obtain
\ben{\label{eq:decomp2}
Y_{s_1}\cdots Y_{s_m}-\E[ Y_{s_1}\cdots Y_{s_m}]= \sum_{l=1}^m \left(p^{m-l}+O(\frac{1}{\sqrt{n}})\right)\sum_{1\leq i_1<\cdots<i_l\leq m} g_{\{s_{i_1},\dots, s_{i_l}\}}.
}
At a high level, \cref{eq:decomp2} implies (see details in the next step) that
similar to the Hoeffding decomposition for functionals of independent random variables, centered graph (say, $H$) counting statistics can be written as a linear combination of $f_{d, A}$ in \cref{eq:hoeffp2} for $1\leq d\leq |\mathcal{E}(H)|$, where $\mathcal{E}(\cdot)$ denotes the edge set. See, for example, \cite[Eq.(28)]{sambale2020logarithmic} for triangle counts. 
Each term in the decomposition is indexed by a subgraph $H_i$ of $H$. Moreover, each term  equals $c_n f_{d,A}$, with $d=|\mathcal{E}(H_i)|$, $A$ having entries of constant order and $c_n\asymp n^{|\mathcal{V}(H)|-|\mathcal{V}(H_i)|}$. It can be checked from \cref{eq:SS20} that the leading term is indexed by edges (this is the same as in the classical Hoeffding decomposition).

\medskip

\noindent\textbf{Step 5: Bounding $\delta_1, \delta_2$ and $\delta_3$.}
For $\delta_1$ in \cref{eq:t1}, using $|g(X)-g(X^{(s)}|\leq C$ (recall \cref{eq:hg}) and $|f(X)-f(X^{(s)})|\leq C/n$, we have
\ben{\label{eq:del1.1}
\delta_1\leq \frac{C}{na} \sum_{s\in \mcl{I}}\E\left[ h(X) |g(X)-g(X^{(s)})|^3\right]+\frac{C}{n}.
}
From the expression of $g(x)$ in \cref{eq:hg}, we have
\ben{\label{eq:gXgXs}
|g(X)-g(X^{(s)})|\leq \sum_{j=2}^k \frac{\beta_j}{n^{v_j-2}} ||\text{Hom}(H_j, G_X, s)|-2n^{v_j-2} e_j p^{e_j-1}|.
}
From \cref{eq:transfer}, for any $j=2,\dots, k$ and $s\in \mcl{I}$, we have
\besn{\label{eq:del1.2}
&\frac{n}{a} \E h(X)\left|\frac{|\text{Hom}(H_j, G_X, s)|-2n^{v_j-2} e_j p^{e_j-1}}{n^{v_j-2}} \right|^3\\
=& n \E \left|\frac{|\text{Hom}(H_j, G_Y, s)|-2n^{v_j-2} e_j p^{e_j-1}}{n^{v_j-2}} \right|^3,
}
where $G_Y$ is the random graph with edge indicators vector $Y$.
Note that
\be{
|\text{Hom}(H_j, G_Y, s)|-\text{mean}=\sum_{\{r_1,\dots, r_{e_j-1}\}\subset \mcl{I}:\atop r_1\cup \cdots \cup r_{e_j-1}\cup s \cong H_j} |\text{Aut}(H_j)| Y_{r_1}\cdots Y_{r_{e_j-1}}-\text{mean},
}
where $\cong$ denotes graph isomorphism, $|\text{Aut}(H_j)|$ denotes the number of automorphisms of $H_j$ to itself, and $-\text{mean}$ always denote centering of relevant random variables.
From \cref{eq:decomp2}, we have
\be{
Y_{r_1}\cdots Y_{r_{e_j-1}} - \E [Y_{r_1}\cdots Y_{r_{e_j-1}}] =\sum_{A\subset \{r_1,\dots, r_{e_j-1}\}:\atop |A|\geq 1} c_{A, \{r_1,\dots, r_{e_j-1}\}} g_A,
}
where $c_{A, \{r_1,\dots, r_{e_j-1}\}}$ are constants uniformly bounded by $C$.
Therefore, we can further write $|\text{Hom}(H_j, G_Y, s)|-\text{mean}$ as
\be{
\sum_{v=3}^{v_j}\sum_{d=1}^{e_j-1}\sum_{A\subset \mcl{I}:\atop |A|=d, |\mathcal{V}(A\cup s)|=v} \left(\sum_{r_1,\dots, r_{e_j-1}\subset \mcl{I}: \atop r_1\cup  \cdots \cup r_{e_j-1} \cup s\cong H_j, \{r_1,\dots, r_{e_j-1}\} \supset A} |\text{Aut}(H_j)| c_{A, \{r_1,\dots, r_{e_j-1}\}} \right) g_A.
}
Observe that for fixed $v=3,\dots, v_j$ and $d=1,\dots, e_j-1$, the coefficient in front of the $g_A$'s inside of the brackets $(\cdot)$ above has $O(n^{v-2})$ non-zero entries (the number of choices of the other $v-2$ vertices of $A\cup s$ except for those 2 vertices connecting $s$), each entry is of the order $O(n^{v_j-v})$ (the number of the remaining $v_j-v$ vertices of $r_1\cup \dots \cup r_{e_j-1}\cup s$). From \cref{eq:SS20}, the order of the random variable $|\text{Hom}(H_j, G(X), s)|-\text{mean}$ is
\ben{\label{eq:del1.4}
O(1) \sum_{v=3}^{v_j} \sum_{d=1}^{e_j-1}\sqrt{n^{v-2} n^{2(v_j-v)}}=O(1)\sum_{v=3}^{v_j} n^{v_j-\frac{v}{2}-1} =O(n^{v_j-\frac{5}{2}}).
}
From \cref{eq:GN19} and \cref{eq:LLN}, the expectation of the normalized $|\text{Hom}|$ is close to the subtracted term in \cref{eq:del1.2} with error rate $O(1/\sqrt{n})$.
This, together with \cref{eq:del1.4}, implies that in the Dobrushin's uniqueness region
\ben{\label{eq:del1.3}
n \E \left|\frac{|\text{Hom}(H_j, G_Y, s)|-2n^{v_j-2} e_j p^{e_j-1}}{n^{v_j-2}} \right|^3 \leq \frac{C}{\sqrt{n}}.
}
See \cref{lemma:1004} for a different proof of \cref{eq:del1.3} that works for the whole subcritical region.

The bounds \cref{eq:del1.1,eq:gXgXs,eq:del1.2,eq:del1.3} imply
\be{
\delta_1\leq \frac{C}{\sqrt{n}}.
}

For $\delta_2$ in \cref{eq:t2}, from \cref{eq:Delta1.1}, we have
\begin{equation*}
  \begin{aligned}
  \Var\left(\sum_{ s\in \mathcal{I} }^{ } \Delta_{1,s}(Y)\right)= \frac{(1-2p)^{2}}{4 \sigma_{n}^{4}} \Var\left(\sum_{ s\in \mathcal{I} }^{  } Y_{s}\right).
  \end{aligned}
\end{equation*}
From \cite[Theorem~1]{ganguly2019sub}), we have 
\ben{\label{eq:gangulyvar}
\Var(\sum_{s\in \mcl{I}}Y_s)\leq C n^2,
}
and we will use this fact several times in the remaining proof.
Together with the fact that $\sigma_{n}^{2}\asymp n^2$, we obtain
\ben{\label{0del2}
\delta_2=\sqrt{\Var\left(\sum_{ s\in \mathcal{I} }^{ } \Delta_{1,s}(Y)\right) } \leq \frac{C}{n}.
}

For $\delta_3$ in \cref{eq:t3}, from \cref{eq:del2.1} and \cref{eq:basym}, we have
\bes{
\Delta_3(Y):=&\sum_{ s\in \mcl{I} } \left(\Delta_{2,i}(Y) -  \E  \Delta_{2,i}(Y)\right)-(1-b) f(Y)\\
=& \sum_{j=2}^k \frac{\beta_j}{2n^{v_j-2}\sigma_n} \left\{\sum_{s\in \mcl{I}} \Big[(1-2p)Y_s+p\Big]\Big[|\text{Hom}(H_j, G_Y, s)|-2n^{v_j-2} e_j p^{e_j-1}\Big] -\text{mean}\right\}\\
&-\left[(1-\frac{Np(1-p)}{\sigma_n^2})+O(\frac{1}{\sqrt{n}})\right]f(Y),
}
where $-\text{mean}$ denotes the centering of the random variable inside the brackets $\{\cdots\}$.
Expanding the product $[\cdots][\cdots]$ in the above equation, we obtain
\besn{\label{eq:Delta3}
\Delta_3(Y)=&\sum_{j=2}^k \frac{\beta_j(1-2p)}{2n^{v_j-2}\sigma_n}\left\{ \sum_{s\in \mcl{I}} Y_{s}|\text{Hom}(H_j,G_Y, s)|-2n^{v_j-2} e_j p^{e_j-1}\sigma_n f(Y)-\text{mean}\right\}\\
&+\sum_{j=2}^k \frac{\beta_j p}{2n^{v_j-2}\sigma_n} \left\{\sum_{s\in \mcl{I}} |\text{Hom}(H_j,G_Y, s)|-\text{mean}\right\}\\
&-\left[(1-\frac{Np(1-p)}{\sigma_n^2})+O(\frac{1}{\sqrt{n}})\right]f(Y).
}
For the first term, we first show that
\be{
\sum_{s\in \mcl{I}} Y_{s}|\text{Hom}(H_j, G_Y, s)|=e_j |\text{Hom}(H_j, G_Y)|.
}
In fact, if we label the vertices of $H_{j}$ by $l_1,l_2,\ldots l_{v_j}$, and
 let $[n]:= \{ 1,2, \ldots ,n \}$, then, 
\begin{equation}
   \label{eq:1001}
  \begin{aligned}
    |\text{Hom}(H_{j},G_Y)|= \sum_{\begin{subarray}{c} k_{1},k_{2}, \ldots ,k_{v_{j}}\in [n] \\ k_{1},k_{2}, \ldots ,k_{v_{j}} \text{ are distinct}\end{subarray}} \prod_{\{ l_p,l_q \}\in \mathcal{E}(H_{j}) } Y_{k_{p},k_{q}}.
  \end{aligned}
\end{equation}
Furthermore, we have, suppose the edge $s$ connects the two vertices $1\leq s_1<s_2\leq n$,
\ba{
  & \sum_{1\leq s_1<s_2\leq n} Y_{s_1,s_2}|\text{Hom}(H_j,G_Y, \{s_1,s_2\})|=\frac{1}{2}\sum_{1\leq s_1\neq s_2\leq n} Y_{s_1,s_2}|\text{Hom}(H_j,G_Y, \{s_1,s_2\})|\\
    =&  \frac{1}{2}\sum_{1\leq s_1\neq s_2\leq n} Y_{s_1,s_2} \sum_{\begin{subarray}{c} k_{1},k_{2}, \ldots ,k_{v_{j}}\in [n] \\ k_{1},k_{2}, \ldots ,k_{v_{j}} \text{ are distinct}\end{subarray}} \sum_{ 1\leq p\neq q\leq v_{j} } 1_{\{ s_1=k_{p},s_2=k_{q} \}}1_{\{ \{ l_p,l_q \}\in \mathcal{E}(H_{j}) \}}   \prod_{\{ l_u,l_v \}\in \mathcal{E}(H_{j})\backslash \{ l_p,l_q \} } Y_{k_{u},k_{v}}\\
    =& \frac{1}{2}\sum_{ 1\leq p\neq q\leq v_{j} }1_{\{ \{ l_p,l_q \}\in \mathcal{E}(H_{j}) \}}\sum_{\begin{subarray}{c} k_{1},k_{2}, \ldots ,k_{v_{j}}\in [n] \\ k_{1},k_{2}, \ldots ,k_{v_{j}} \text{ are distinct}\end{subarray}} \prod_{\{ l_u,l_v \}\in \mathcal{E}(H_{j}) } Y_{k_{u},k_{v}}\\
    =& e_{j} |\text{Hom}(H_{j},G_Y)|, 
}
where $1_{\{\cdot\}}$ is the indicator function. The leading term (indexed by edges) in the Hoeffding decomposition of $[e_j |\text{Hom}(H_j, G_Y)|-\text{mean}]$ computed from \cref{eq:decomp2} is $2n^{v_j-2} e_j^2 p^{e_j-1} \sigma_n f(Y)$.

Similarly, for the second term in \cref{eq:Delta3},
\ban{
  &\sum_{1\leq s_1<s_2\leq n} |\text{Hom}(H_j,G_Y, \{s_1,s_2\})|
 =\frac{1}{2}\sum_{1\leq s_1\neq s_2\leq n} |\text{Hom}(H_j,G_Y, \{s_1,s_2\})|\nonumber \\
 =&  \frac{1}{2}\sum_{1\leq s_1\neq s_2\leq n}  \sum_{\begin{subarray}{c} k_{1},k_{2}, \ldots ,k_{v_{j}}\in [n] \\ k_{1},k_{2}, \ldots ,k_{v_{j}} \text{ are distinct}\end{subarray}} \sum_{ 1\leq p\neq q\leq v_{j} } 1_{\{ s_1=k_{p},s_2=k_{q} \}}1_{\{ \{ l_p,l_q \}\in \mathcal{E}(H_{j}) \}}   \prod_{\{ l_u,l_v \}\in \mathcal{E}(H_{j})\backslash \{ l_p,l_q \} } Y_{k_{u},k_{v}}\nonumber \\
 =&  \frac{1}{2}  \sum_{\begin{subarray}{c} k_{1},k_{2}, \ldots ,k_{v_{j}}\in [n] \\ k_{1},k_{2}, \ldots ,k_{v_{j}} \text{ are distinct}\end{subarray}} \sum_{ 1\leq p\neq q\leq v_{j} } 1_{\{ \{ l_p,l_q \}\in \mathcal{E}(H_{j}) \}}   \prod_{\{ l_u,l_v \}\in \mathcal{E}(H_{j})\backslash \{ l_p,l_q \} } Y_{k_{u},k_{v}}\nonumber \\
 =&  \frac{1}{2}  \sum_{ 1\leq p\neq q\leq v_{j} } 1_{\{ \{ l_p,l_q \}\in \mathcal{E}(H_{j}) \}}\sum_{\begin{subarray}{c} k_{1},k_{2}, \ldots ,k_{v_{j}}\in [n] \\ k_{1},k_{2}, \ldots ,k_{v_{j}} \text{ are distinct}\end{subarray}}    \prod_{\{ l_u,l_v \}\in \mathcal{E}(H_{j})\backslash \{ l_p,l_q \} } Y_{k_{u},k_{v}}\nonumber \\
=&  \frac{1}{2}  \sum_{ 1\leq p\neq q\leq v_{j} } 1_{\{ \{ l_p,l_q \}\in \mathcal{E}(H_{j}) \}}
 |\text{Hom}(H_{j}\backslash\{l_p,l_q\},G_Y)|,  \label{eq:1003}
 }
where $H_{j}\backslash \{ l_p,l_q \}$ denotes the subgraph of $H_{j}$ by deleting the edge $\{ l_p,l_q \}$ in $H_{j}$ (but keeping all the vertices even they become isolated). Since the leading term (indexed by edges) in the Hoeffding decomposition of $|\text{Hom}(H_{j}\backslash\{l_p,l_q\},G_Y)|$ computed from \cref{eq:decomp2} is 
\begin{equation*}
  \begin{aligned}
 2 n^{v_{j}-2} (e_{j}-1) p^{e_{j}-2} \sigma_n f(Y),
  \end{aligned}
\end{equation*}
we subtract 
\begin{equation*}
  \begin{aligned}
    \frac{1}{2}  \sum_{ 1\leq p\neq q\leq v_{j} } 1_{\{ \{ l_p,l_q \}\in \mathcal{E}(H_{j}) \}}  2 n^{v_{j}-2} (e_{j}-1) p^{e_{j}-2} \sigma_n f(Y)= 2 n^{v_{j}-2} e_{j} (e_{j}-1) p^{e_{j}-2} \sigma_n f(Y)
  \end{aligned}
\end{equation*}
from $\sum_{1\leq s_1<s_2\leq n} |\text{Hom}(H_j, G_Y, \{s_1,s_2\})|$ in the equality below to obtain
\begin{equation}
   \label{eq:1006}
  \begin{aligned}
 &\Delta_3(Y)\\=&\sum_{j=2}^k \frac{\beta_j(1-2p)}{2n^{v_j-2}\sigma_n}\left\{  e_j |\text{Hom}(H_j, G_Y)|-2n^{v_j-2} e_j^2 p^{e_j-1} \sigma_n f(Y)-\text{mean}\right\}\\
            &+\sum_{j=2}^k \frac{\beta_j p}{2n^{v_j-2}\sigma_n} \left\{\frac{1}{2}  \sum_{\begin{subarray}{c}
1\leq p\neq q\leq v_{j}\\\{ l_p,l_q \}\in \mathcal{E}(H_{j})
\end{subarray}} 
\left[|\text{Hom}(H_{j}\backslash\{l_p,l_q\},G_Y)|-2 n^{v_{j}-2} (e_{j}-1) p^{e_{j}-2} \sigma_n f(Y)\right]-\text{mean}\right\}\\
&+\left\{\sum_{j=2}^k \frac{\beta_j(1-2p)}{2}(2e_j^2 p^{e_j-1}-2e_j p^{e_j-1})+\sum_{j=2}^k \frac{\beta_j p}{2} 2e_j(e_j-1)p^{e_j-2}\right\} f(Y)\\
&-\left[(1-\frac{p(1-p)}{\sigma_n^2/N})+O(\frac{1}{\sqrt{n}})\right] f(Y), 
  \end{aligned}
\end{equation}
where the coefficients were chosen above so that the leading terms indexed by edges in the Hoeffding decomposition of the $|\text{Hom}|$'s were subtracted (in order to apply \cref{lemma:1001} below).

Recall $\delta_3=\sqrt{\Var(\Delta_3(Y))}$.
From the choice of $\sigma_n^2$ in \cref{eq:sigmasq} and $\E [f^2(Y)]\leq C$ (recall \cref{eq:gangulyvar}), the standard deviation of the difference of the last two terms in \cref{eq:1006} is of the order $O(1/\sqrt{n})$.

To complete the proof, we need the following lemma, which is a straightforward generalization of the result of \cite{sambale2020logarithmic} on triangle counts.  
\begin{lemma}\label{lemma:1001}
For any fixed graph $H$, which can have isolated vertices, with
$e=|\mathcal{E}(H)|\geq 1$ and $v=|\mathcal{V}(H)|$, then in the  Dobrushin's 
 uniqueness region,  we have 
\begin{equation}
   \label{eq:1007}
  \begin{aligned}
    \Var\left(|\text{Hom}(H,G_Y)|- 2 n^{v-2} e p^{e-1} \sigma_{n}f(Y)\right) = O(n^{2v-3}). 
  \end{aligned}
\end{equation}

\end{lemma}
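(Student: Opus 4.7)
The plan is to apply the Hoeffding-type decomposition \cref{eq:decomp2} to $\text{Hom}(H, G_Y)$, identify its edge-indexed ($\ell = 1$) component, match it with $2n^{v-2} e p^{e-1} \sigma_n f(Y)$ up to a residual of variance $O(n^{2v-3})$, and bound the higher-order ($\ell \geq 2$) components by the same order via \cref{eq:SS20}. Expanding $\text{Hom}(H, G_Y) = \sum_\phi \prod_{e' \in \mathcal{E}(H)} Y_{\phi(e')}$ over injective maps $\phi \colon \mathcal{V}(H) \to [n]$ and setting $R(\phi) := \{\phi(e') : e' \in \mathcal{E}(H)\}$ (a set of $e$ distinct edges, since $\phi$ is injective), applying \cref{eq:decomp2} with $m = e$ to each product $\prod_{r \in R(\phi)} Y_r$, and swapping sums, we obtain
\begin{equation*}
\text{Hom}(H, G_Y) - \mathbb{E}[\text{Hom}(H, G_Y)] = \sum_{\ell=1}^{e} \bigl( p^{e-\ell} + O(n^{-1/2}) \bigr) \sum_{A \subset \mathcal{I},\, |A| = \ell} N(A)\, g_A,
\end{equation*}
where $N(A) := \lvert \{ \phi \text{ injective} : R(\phi) \supset A \} \rvert$.

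For the $\ell = 1$ level, a direct count yields $N(\{s\}) = 2e (n-2)(n-3) \cdots (n-v+1) = 2e\, n^{v-2} + O(n^{v-3})$ independently of $s$. From \cref{eq:hoeffp2}, $g_{\{s\}} = Y_s - \tilde p$, and $\mu_n = N \tilde p$ by vertex symmetry of the ERGM, so $\sum_s g_{\{s\}} = \sigma_n f(Y)$ holds exactly. Hence the $\ell = 1$ contribution equals $2 e p^{e-1} n^{v-2} \sigma_n f(Y) + R_1$ with $R_1 = c_n\, \sigma_n f(Y)$ for some constant $|c_n| = O(n^{v-5/2})$. Applying \cref{eq:SS20} with the all-ones $1$-tensor gives $\Var(\sigma_n f(Y)) = O(n^2)$, whence $\Var(R_1) = O(n^{2v-3})$.

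For $\ell \geq 2$, we rewrite each term as $\bigl( p^{e-\ell} + O(n^{-1/2}) \bigr) f_{\ell, B^{(\ell)}}$, where $B^{(\ell)}$ is the symmetric $\ell$-tensor with $B^{(\ell)}_I = N(\{I_1, \ldots, I_\ell\})/\ell!$ on tuples of distinct edges. Group $A$'s with $|A| = \ell$ by the vertex count $v_A := \lvert \bigcup_{s \in A} s \rvert$; since $\ell \geq 2$ and the edges of $A$ are distinct, $v_A \geq 3$. For each such $v_A$, there are $O(n^{v_A})$ choices of $A$, and embedding the remaining $v - v_A$ vertices of $H$ shows $|N(A)| \leq C n^{v - v_A}$. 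Hence $\|B^{(\ell)}\|_2^2 \leq C n^{2v - v_A} \leq C n^{2v - 3}$, and \cref{eq:SS20} yields $\Var(f_{\ell, B^{(\ell)}}) = O(n^{2v-3})$. Summing over the (finitely many) levels $\ell = 1, \ldots, e$ with Cauchy--Schwarz yields the claim.

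The main technical point to monitor is the $\ell = 1$ level: the $O(n^{-1/2})$ additive error in the decomposition coefficient, once multiplied by the leading combinatorial factor $n^{v-2}$ and by $\sigma_n f(Y)$ of standard deviation $O(n)$, contributes variance precisely $O(n^{2v-3})$, i.e.\ exactly at the threshold we are trying to establish. The exact identity $\sum_s g_{\{s\}} = \sigma_n f(Y)$ (holding on the nose thanks to $\mu_n = N \tilde p$) is crucial here, since any residual $O(1)$ discrepancy would multiply into a contribution of variance $O(n^{2v-2})$ and violate the claimed bound.
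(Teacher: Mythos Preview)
Your proof is correct and takes essentially the same approach as the paper: both apply the Hoeffding-type decomposition \cref{eq:decomp2} to $\text{Hom}(H,G_Y)$, subtract the leading ($\ell=1$) edge-indexed term as a multiple of $\sigma_n f(Y)$, and bound all remaining levels via \cref{eq:SS20} using the stratification by vertex count $v_A\geq 3$. The only cosmetic differences are that you index the expansion by edge subsets $A\subset\mathcal{I}$ with multiplicities $N(A)$ (handling isolated vertices of $H$ directly in the count) whereas the paper indexes by labeled subgraphs $A\subset H$ after first reducing to the isolate-free case.
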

\begin{proof}[Proof of \cref{lemma:1001}]
It suffices to consider the case that $H$ does not have isolated vertices. Otherwise, each isolated vertex contributed to a factor of $n^2$ to both sides of the equation \cref{eq:1007}.

We label the vertices of $H$ by $l_1,l_2,\dots, l_v$. We also think of each homomorphism of $H$ into $G_Y$ as a map of $v$ distinct vertices $k_1,\dots, k_v\in [n]$ to the vertices $l_1,\dots, l_v$ of $H$ such that $Y_{k_i, k_j}=1$ if $\{l_i,l_j\}$ is an edge in $H$.

To apply \cref{eq:SS20}, we first rewrite $|\text{Hom}(H,G_Y)|$ as a linear combination of $g$ in \cref{eq:hoeffp2}.
From \cref{eq:1001} and \cref{eq:decomp2}, we have 
\ba{
   & |\text{Hom}(H,G_Y)|-\text{mean}\\=& \sum_{\begin{subarray}{c} k_{1},k_{2}, \ldots ,k_{v}\in [n] \\ k_{1},k_{2}, \ldots ,k_{v} \text{ are distinct}\end{subarray}} \prod_{\{ l_p,l_q \}\in \mathcal{E}(H) } Y_{k_{p},k_{q}}-\text{mean}\\
 =&  \sum_{\begin{subarray}{c} k_{1},k_{2}, \ldots ,k_{v}\in [n] \\ k_{1},k_{2}, \ldots ,k_{v} \text{ are distinct}\end{subarray}} \sum_{ d=1 }^{ e } \sum_{ r=2 }^{ v }\sum_{
   \begin{subarray}{c}
A\subset H, \lvert \mathcal{E}(A) \rvert=d,\lvert \mathcal{V}(A) \rvert=r,\\ A \text{ 
 do not have isolate vertices}\\
\end{subarray}
  }(p^{e-d}+O(\frac{1}{\sqrt{n}})) g_{ \{\{k_p,k_q\}:1\leq p<q\leq v,  \{l_p,l_q\}\in \mathcal{E}(A)\}} \\
 =&\sum_{ d=1 }^{ e } \sum_{ r=2 }^{ v } \sum_{
   \begin{subarray}{c}
A\subset H, \lvert \mathcal{E}(A) \rvert=d,\lvert \mathcal{V}(A) \rvert=r,\\ A \text{ 
 do not have isolate vertices}
\end{subarray}
  }
  \sum_{ \begin{subarray}{c} k_{1},k_{2}, \ldots ,k_{r}\in [n] \\ k_{1},k_{2}, \ldots ,k_{r} \text{ are distinct} 
        \end{subarray} } \left(\prod_{m=1}^{v-r} (n-r-m+1)\right)\\
  &\qquad\qquad\qquad\qquad \times\left(p^{e-d}+O(\frac{1}{\sqrt{n}})\right) g_{ \{\{k_p,k_q\}: 1\leq p<q\leq v, \{l_p,l_q\}\in \mathcal{E}(A)\}},
}
where the summations over $A$ are over all the labeled subgraphs of $H$.
For example, if $H$ is a rectangle with
$$\mathcal{V}(H)=\{l_1,l_2,l_3,l_4\}, \text{ and } \mathcal{E}(H)=\{\{l_1,l_2\}, \{l_2,l_3\}, \{l_3,l_4\}, \{l_4,l_1\}\}$$
then for $d=3$, $A$ can be $A_1,A_2,A_3,A_4$ with
$$\mathcal{V}(A_1)=\{l_1,l_2,l_3,l_4\}, \text{ and } \mathcal{E}(A_1)=\{\{l_1,l_2\}, \{l_2,l_3\}, \{l_3,l_4\}\};$$
$$\mathcal{V}(A_2)=\{l_1,l_2,l_3,l_4\}, \text{ and } \mathcal{E}(A_1)=\{\{l_1,l_2\}, \{l_2,l_3\}, \{l_4,l_1\}\};$$
$$\mathcal{V}(A_3)=\{l_1,l_2,l_3,l_4\}, \text{ and } \mathcal{E}(A_1)=\{\{l_1,l_2\},  \{l_3,l_4\}, (l_4,l_1\}\};$$
$$\mathcal{V}(A_4)=\{l_1,l_2,l_3,l_4\}, \text{ and } \mathcal{E}(A_1)=\{\{l_2,l_3\}, \{l_3,l_4\}, (l_4,l_1\}\}.$$
The leading term (corresponding to $d=1$ and $r=2$) is equal to 
\begin{equation*}
   \label{eq:1009}
  \begin{aligned}
    2 e \left(p^{e-1}+O(\frac{1}{\sqrt{n}})\right) \prod_{m=2}^{v-1} (n-m)  \sigma_{n} f(Y).
  \end{aligned}
\end{equation*}
From \cref{eq:SS20} and a similar discussion leading to \cref{eq:del1.4}, we have 
\begin{equation}
   \label{eq:1010}
  \begin{aligned}
  \Var\left(|\text{Hom}(H,G_Y)|-2 e \left(p^{e-1}+O(\frac{1}{\sqrt{n}})\right) \left(\prod_{m=2}^{v-1} (n-m)\right)  \sigma_{n} f(Y)\right)=O(n^{2(v-3)}n^{3})=O(n^{2v-3}).
  \end{aligned}
\end{equation}
By \cref{eq:1010} and using the facts that $\Var(f(Y))=O(1)$ and $\left(\prod_{m=2}^{v-1} (n-m)\right)-n^{v-2}=O(n^{v-3})$, we complete the proof.
\end{proof}

Now, applying \cref{lemma:1001} to each "$|$Hom$(\cdot)|-$leading term" in the first two terms of \cref{eq:1006}, we obtain
\be{
\delta_3=\sqrt{\Var(\Delta_3(Y))}\leq \frac{C}{\sqrt{n}}.
}
This finishes the proof of \cref{eq:thmdob} for the Wasserstein distance in \cref{thm:ERGM}. 

\medskip

\noindent\textbf{Step 6: Bounding $\delta_1'$.}
Finally, we bound $\delta_1'$ and obtain the Kolmogorov bound in \cref{eq:thmdob}. 
Since all the edge indicators are bounded by $1$, we can take $D^{*}_{i}(X, X')= 1/\sigma_{n}$. Similar to the proof for $\delta_1$ in \cref{eq:del1.1}, we have 
\begin{equation}
   \label{eq:100012}
  \begin{aligned}
 & \frac{1}{a}\sum_{ s\in \mcl{I} } \E\Big\{ h(X) \exp\left[ |g(X)- g(X^{(s)})| \right] D^*_{i}(X,X') \left\lvert f(X)- f(X^{(s)})\right\rvert |g(X)- g(X^{(s)})|\Big\}\\
    \leq &  \frac{C}{n^2 a} \sum_{ s\in \mathcal{I} }^{  } \E \big[h(X) \lvert g(X)-g(X^{(s)}) \rvert\big]\\ 
    \leq & \frac{C}{\sqrt{n}}, \\
  \end{aligned}
\end{equation}
where the last inequality follows from a similar argument as that leading to \cref{eq:del1.3}. For the second term in $\delta_{1}'$, with $D^{*}_{i}(X, X')= 1/\sigma_{n}$ and by \cref{eq:LLN}, \cref{eq:transfer} and the facts that $\sigma_n^2\asymp n^2$ and $\Var(\sum_{s\in \mcl{I}} Y_s)\leq C n^2$ (recall \cref{eq:gangulyvar}), we have
\begin{equation}
   \label{eq:100013}
  \begin{aligned}
&\frac{1}{a}\E\left\{ h(X) \left\lvert\sum_{ i=1 }^{ N } \E [ D^*_{i}(X,X')(f(X)- f(X^{(i)}))|X]\right\rvert\right\}\\
=&\frac{1}{a \sigma_{n}^{2}}\E\left\{ h(X) \left\lvert\sum_{ s\in \mathcal{I} }^{ } (X_{s}-p) \right\rvert\right\}
=  \frac{1}{\sigma_n^{2}} \E  \left\lvert\sum_{ s\in \mathcal{I} }^{ } (Y_{s} -p) \right\rvert 
\leq  \frac{C}{\sqrt{n}}.
  \end{aligned}
\end{equation}
Combining \cref{eq:100012,eq:100013}, we obtain
\begin{equation*}
   \label{eq:100014}
  \begin{aligned}
  \delta_{1}' \leq  \frac{C}{\sqrt{n}}.
  \end{aligned}
\end{equation*}
This finishes the proof of the Kolmogorov bound in \cref{thm:ERGM} Dobrushin's uniqueness region.

\medskip

\noindent\textbf{Step 7: Bounding $\delta_1, \delta_2$, $\delta_3$ and $\delta_1'$ in the subcritical region.}
To extend our results from Dobrushin's uniqueness region to the whole subcritical region, we
need bounds for the left-hand sides of \cref{eq:del1.3,eq:1007} in the whole subcritical region as follows. 
\begin{lemma}\label{lemma:1004}
    For any fixed graph $H$, which can have isolated vertices, with $e=|\mathcal{E}(H)|\geq 1$ and $v=|\mathcal{V}(H)|$ being its number of edges and vertices, respectively, in the subcritical region, we have
\begin{equation}
   \label{eq:1011}
  \begin{aligned}
    \mathbb{E}\left||\text{Hom}(H,G_Y,s)|- mean\right|^3 = O(n^{3v-\frac{15}{2}}). 
  \end{aligned}
\end{equation}
\end{lemma}

\begin{lemma}\label{lemma:1002}
For any fixed graph $H$, which can have isolated vertices, with $e=|\mathcal{E}(H)|\geq 1$ and $v=|\mathcal{V}(H)|\geq 3$, in the subcritical region, we have 
\begin{equation}
   \label{eq:10070}
  \begin{aligned}
    \Var\left(|\text{Hom}(H,G_Y)|- 2 n^{v-2} e p^{e-1} \sigma_{n}f(Y)\right) = O(n^{2v-\frac{5}{2}}). 
  \end{aligned}
\end{equation}
\end{lemma}
Note that the bound \cref{eq:10070} is not as good as that in \cref{eq:1007}. Although it is enough to prove the CLT, the resulting error bound \cref{eq:thmsub} is worse than \cref{eq:thmdob}. We leave a more careful study of higher-order concentration inequalities in the future work.

We first bound $\delta_1, \delta_2$, $\delta_3$ and $\delta_1'$ using \cref{lemma:1002,lemma:1004} and then prove the two lemmas. First, it is easy to see that the bound \cref{0del2} for $\delta_2$ does not change in the subcritical region. 
Moreover, the arguments leading to the bound $\delta_1\leq C/\sqrt{n}$ in \cref{eq:del1.1}--\cref{eq:del1.3} are still valid given the new \cref{lemma:1004}.
Therefore, we still have
\ben{\label{0delt1}
\delta_1\leq \frac{C}{\sqrt{n}}.
}

Next, using \cref{lemma:1002}
instead of \cref{eq:1007} in bounding $\delta_3$,
we obtain
\ben{\label{0delt3}
\delta_3=\sqrt{\Var(\Delta_3(Y))}= O(n^{-1/4}).
}

For $\delta_1'$, similar to $\delta_1$, applying \cref{lemma:1004} and H\"older's inequaliy, we obtain
\begin{equation}\label{0delt1'}
    \delta_1' \leq \frac{C}{\sqrt{n}}.
\end{equation}
Combining \cref{0delt1,0del2,0delt3,0delt1'}, we complete the proof of \cref{eq:thmsub} in \cref{thm:ERGM} for the subcritical region. 
\hfill $\square$

\medskip

It remains to prove \cref{lemma:1004,lemma:1002}. The proof extends the approach of \cite{chatterjee2007stein} and \cite{ganguly2019sub} to higher-order concentration inequalities. 

Recall we identify a graph $G$ on $n$ vertices with its edge indicators $y=\{y_e\}_{e\in \mathcal{I}}$, where $\mathcal{I}:=\{(i,j): 1\leq i<j\leq n\}$.

First, we define the grand coupling $\left(\left\{Z^y(t)\right\}_{y\in\mathcal{G}_n}\right)_{t\ge0}$ for the Glauber dynamics starting from all initial states $y$ in $\mathcal{G}_n$. They are discrete-time reversible Markov chains with ERGM \cref{ERGM} as the stationary distribution. See, for example, \cite{ganguly2019sub} for more details. For convenience, we define
\begin{align*}
\Psi(u):=\frac{e^{u}}{1+e^{u}}.
\end{align*}
Let $I$ be a uniform random edge over $\mathcal{I}$ and $U$ be a uniform random variable on $[0,1]$ independent of $I$.
For each $y\in \mathcal{G}_n$, define
\begin{equation}\label{eq:4003}    S^y:=\begin{cases}
  1& \text{ if } 0\le U\le \Psi(\Delta_IT(y)) \\
  0& \text{ if } \Psi(\Delta_IT(y))< U\le 1
\end{cases},
\end{equation}
where $T(y)= n^2 \sum_{i=1}^k \beta_i t(H_i, G_y)$ (the exponent in the definition of ERGM \cref{ERGM}) and $\Delta_I T(\cdot)$ as defined in \cref{deltah}, i.e., $\Delta_I T(y)=\sum_{i=1}^k \beta_i |\text{Hom}(H_i, G_y, I)| n^{2-|\mathcal{V}(H_j)|}$.
Given $Z^y(0)=y$, we define the next state of the Glauber dynamics $Z^y(1)$ as
\begin{align*}         \left(Z^y(1)\right)_e:=\begin{cases}
  y_e& \text{ if } e\ne I \\
  S^y& \text{ if } e=I
\end{cases},
\end{align*}
where $\left(Z^y(1)\right)_e$ and $y_e$, $e\in \mathcal{I}$, are the edge indicators of $Z^y(1)$ and $y=Z^y(0)$, respectively. Then, the next states $\left\{Z^y(1)\right\}_{y\in\mathcal{G}_n}$ are determined by the common random variables $U$ and $I$. We define $Z^y(t), t\geq 2$ similarly by choosing $I$ and $U$ independently in each step.
We also define a natural partial ordering on $\mathcal{G}_n$. We say $x\le y$ if and only if $x_e\le y_e$ for every edge $e$. 
Then we find that the grand coupling $\left(\left\{Z^y(t)\right\}_{y\in\mathcal{G}_n}\right)_{t\ge0}$ is monotone in $y$ by the definition \cref{eq:4003} and monotonicity of $|\text{Hom}(\cdot)|$. 

We first prove \cref{lemma:1002}.

\begin{proof}[Proof of \cref{lemma:1002}]
Fix any edge $s$ in $\mathcal{I}$ and recall $|\text{Hom}(H,G_Y, s)|$ denotes the number of homomorphisms of $H$ into $G_Y$ but requiring that an edge of $H$ must be mapped to the edge $s$ (no matter the edge $s$ is present in $G_Y$ or not). By symmetry, $\mathbb{E}[|\text{Hom}(H,G_Y,s)|]$ does not depend on $s$.

From \cref{eq:GN19} and \cref{eq:LLN}, we have
\be{
\left| \E |\text{Hom}(H,G_Y,s)| - 2n^{v-2}e p^{e-1}\right|\leq C n^{v-2.5}.
}
Together with \cref{eq:gangulyvar}, it suffices to prove
\ben{\label{eq:varlem5.3}
\Var\left(|\text{Hom}(H,G_Y)|-\mathbb{E}[|\text{Hom}(H,G_Y,s)|]\sigma_nf(Y)\right)=O(n^{2v-2.5}).
}
We denote
\begin{align}\label{eq:hy}
    h(y):=|\text{Hom}(H,G_y)|-\mathbb{E}[|\text{Hom}(H,G_Y,s)|]\sigma_nf(y)-\E |\text{Hom}(H,G_Y)|.
\end{align}

  We follow the approach of \cite{chatterjee2007stein} and \cite{ganguly2019sub} to obtain \cref{eq:varlem5.3}. 
  Let $Z(t), t=0,1,2,\dots$ be the Glauber dynamics starting from the steady state $Z(0)=Y$. Let $Y'=Z(1)$. From reversibility,
  $(Y,Y')$ is an exchangeable pair of random variables, i.e., $\mathcal{L}(Y, Y')=\mathcal{L}(Y', Y)$. 
  We define  
  \ben{\label{eq:Hxy}
  H(x,y)=\sum_{t=0}^{\infty}(\mathscr{P}^t h(x)-\mathscr{P}^t h(y)),
  }
  where $\mathscr{P}h(y)=\mathbb{E}[h(Y')|Y=y]$ is the Markov kernel. From Lemma 4.1 in \cite{chatterjee2005concentration}, $H(x,y)$ is an antisymmetric function and satisfies
  \begin{align*}
      h(Y)=\mathbb{E}[H(Y,Y')|Y].
  \end{align*}
  Then we have
\begin{equation}\label{eq:0034}
\begin{aligned}   &\Var\left(|\text{Hom}(H,G_Y)|-\mathbb{E}[|\text{Hom}(H,G_Y,s)|]\sigma_n f(Y)\right)=\mathbb{E}\left(h(Y)^2\right)\\
=&\mathbb{E}\left(h(Y)H(Y,Y')\right)=-\mathbb{E}\left(h(Y')H(Y,Y')\right) =\frac{1}{2}\mathbb{E}\left((h(Y)-h(Y'))H(Y,Y')\right)\\
\le&\frac12\mathbb{E}\left\{\mathbb{E}[|h(Y)-h(Y')||H(Y,Y')|\mid Y]\right\}
\end{aligned}
\end{equation}
By the construction of $(Y,Y')$, we have
\ben{\label{eq:hH}
    \mathbb{E}[|h(Y)-h(Y')||H(Y,Y')|\mid Y=y]\le\frac1N\sum_{l\in \mathcal{I}}\left | h(y^{(l,1)})-h(y^{(l,0))}) \right | \left | H(y^{(l,1)},y^{(l,0)}) \right | 
}
where $N={n\choose 2}$ is the number of edges in the complete graph $\mathcal{I}$ with $n$ vertices
and the definitions of $y^{(l,1)}$ and $y^{(l,0)}$ are the same as $x^{(l,1)}$ and $x^{(l,0)}$ above \cref{deltah}. 
From \cref{eq:Hxy}, we bound  $H(y^{(l,1)},y^{(l,0)})$ as
\begin{equation}\label{eq:Hbound}
\left|H(y^{(l,1)}),y^{(l,0))})\right|\le\sum_{t=0}^{\infty}\left|\mathbb{E}h(y^{(l,1)}(t))-\mathbb{E}h(y^{(l,0)}(t))\right|
\end{equation}
where $y^{(l,1)}(t)$ and $y^{(l,0)}(t)$ are the grand coupling $\left(Z^y(t)\right)_{t\ge0}$ from initial configurations $y^{(l,1)}$ and $y^{(l,0)}$, respectively. 
From the definition of $h(\cdot)$ in \cref{eq:hy} and accouting for the remaining $v-2$ vertices of $H$ after fixing an edge $e$, we have
\ben{\label{eq:diffh}
    \left|h(y^{(l,1))}(t))-h(y^{(l,0)}(t))\right|=O(n^{v-2})\sum_{e\in \mathcal{I}}\mathbbm{1}\left\{(y^{(l,1)}(t))_e\ne(y^{(l,0)}(t))_e\right\}.
}
Let $r(y,l,t)$ be an $N={n\choose 2}$ dimensional vector with $$r(y,l,t)_e:=\mathbb{P}((y^{(l,1)}(t))_e\ne (y^{(l,0)}(t))_e).$$ 
From the second displayed equation (line 8) on page 2283 in \cite{ganguly2019sub} with $v_l=1$ for $1\leq l\leq N$, we have 
\begin{equation}
    \label{rbound}
    \sum_{t=0}^{\infty}\|r(y,l,t)\|_1=O(n^2),
\end{equation}
where 
$\|r(y,l,t)\|_1=\mathbb{E}\left[\sum_{e\in \mathcal{I}}\mathbbm{1}\left\{(y^{l+}(t))_e\ne(y^{l-}(t))_e\right\}\right]$.
Using \cref{rbound,eq:Hbound,eq:diffh}, we obtain
\ben{\label{eq:Hbound2}
\left|H(y^{(l,1)},y^{(l,0)})\right|=\sum_{t=0}^{\infty}O(n^{v-2})\|r(y,l,t)\|_1=O(n^{v}).
}
From \cref{eq:0034,eq:hH,eq:Hbound2}, we obtain 
\besn{\label{eq:varhy}
    &\mathrm{Var}(h(Y))\le\mathbb{E}\left[\frac1N\sum_{l\in \mathcal{I}}\left | h(Y^{(l,1)})-h(Y^{(l,0)}) \right | \left | H(Y^{(l,1)},Y^{(l,0)}) \right | \right]\\
    =&\frac1N\sum_{l\in \mathcal{I}} O(n^v)E\left| h(Y^{(l,1)})-h(Y^{(l,0)})\right|\\
    =&\frac1N\sum_{l\in \mathcal{I}} O(n^v)\sqrt{\mathbb{E}(|\text{Hom}(H,G_Y,l)|-\mathbb{E}|\text{Hom}(H,G_Y,l)|)^2},
}
where we used $h(Y^{(l,1)})-h(Y^{(l,0)})=|\text{Hom}(H,G_Y,l)|-\mathbb{E}|\text{Hom}(H,G_Y,s)|$.

Next, we follow a similar approach to estimate $\mathrm{Var}(|\text{Hom}(H,G_{Y},l)|)$. Let $g_l(Y)=|\text{Hom}(H,Y,l)|$ and $G_l(x,y)=\sum_{t=0}^{\infty}(\mathscr{P}^t g_l(x)-\mathscr{P}^t g_l(y))$. Similar to \cref{eq:0034} and \cref{eq:hH}, we have
\begin{equation}\label{eq:0038}
\begin{aligned}
    &\mathbb{E}(|\text{Hom}(H,G_{Y},l)|-\mathbb{E}|\text{Hom}(H,G_{Y},s)|)^2
    =\frac12\mathbb{E}[(g_l(Y)-g_l(Y'))G_l(Y,Y')]\\
\end{aligned}
\end{equation}
and
\besn{\label{eq:00039}
   &\mathbb{E}[(g_l(Y)-g_l(Y'))G_l(Y,Y')|Y=y]\\
    \le&\frac1N\sum_{r\in \mathcal{I}: r\ne l} \left[|g_l(y^{(r,1)})-g_l(y^{(r,0)})||G_l(y^{(r,1)},y^{(r,0)})|\right].
}
Similar to the estimation of $H(y^{(l,1)},y^{(l,0)})$, we can bound $G_l(y^{(r,1)},y^{(r,0)})$ using \cref{rbound} as
$$
\begin{aligned}  &|G_l(y^{(r,1)},y^{(r,0)})|\le\sum_{t=0}^{\infty}|\mathbb{E}g_l(y^{(r,1)}(t))-\mathbb{E}g_l(y^{(r,0)}(t))|\\
=&\sum_{t=0}^{\infty}O(n^{v-3})\|r(y,r,t)\|_1
    =O(n^{v-1}).\\
\end{aligned}
$$
Furthermore, for any $y\in \mathcal{G}_n$, 
$$
\begin{aligned}
   |g_l(y^{(r,1)})-g_l(y^{(r,0)})|=|\text{Hom}(H,G_y,l,r)|,
\end{aligned}
$$
where $|\text{Hom}(H,G_y,l,r)|$ denotes the number of homomorphisms of $H$ into $G_y$ but requiring to cover the edges $l$ and $r$ (no matter the edges $l$ and $r$ are present in $G_y$ or not).
When the edges $l$ and $r$ share a same vertex, we have $|\text{Hom}(H,G_y,l,r)|= O(n^{v-3})$, and there are $O(n)$ number of pair $(l,r)$ for fixed edge $l$. When the edges $l$ and $r$ do not share a same vertex, we have $ |\text{Hom}(H,G_y,l,r)|= O(n^{v-4})$, and there are $O(n^2)$ number of pairs $(l,r)$ for a fixed edge $l$. Therefore, we have
\begin{equation}\label{eq:0039}
 \begin{aligned}
    &\quad\mathbb{E}[|(g_l(Y)-g_l(Y'))G_l(Y,Y')||Y]
    \\&= \frac{O(n^{v-1})}{N}\left(\sum_{r:\  r\ \mbox{and}\ l\ \mbox{are
connected}}|\text{Hom}(H,G_{Y},l,r)|\right.\\    &\left.\quad+\sum_{r:\  r\ \mbox{and}\ l\ \mbox{are disconnected}}|\text{Hom}(H,G_{Y},l,r)|\right)\\
    &= O(n^{v-3})(O(n)O(n^{v-3})+O(n^2)O(n^{v-4}))=O(n^{2v-5}).
\end{aligned}   
\end{equation}
Going back to \cref{eq:0038}, we have
\ben{\label{eq:varhoml}
\begin{aligned}
    &\mathrm{Var}(|\text{Hom}(H,G_Y,l)|)=\frac12\E\left[\mathbb{E}[(g_l(Y)-g_l(Y'))G_l(Y,Y')|Y] \right]=O(n^{2v-5}).
\end{aligned}
}
Combining \cref{eq:varhy} with the above estimate, we finally obtain
\begin{equation*}
 \begin{aligned}
    \mathrm{Var}(h(Y))= O(n^v)O(n^{v-\frac52})=O(n^{2v-\frac52}).
\end{aligned}   
\end{equation*}
This proves \cref{eq:varlem5.3}.
\end{proof}

Finally, we prove \cref{lemma:1004}.

\begin{proof}[Proof of \cref{lemma:1004}]
The proof is similar to proving \cref{eq:varhoml}.
    Let \be{g_s(Y)=|\text{Hom}(H,G_Y,s)|-mean.} Then $\mathbb{E}g_s(Y)=0$ and $g_s(Y)=\mathbb{E}[G_s(Y,Y')\mid Y]$, where $G_s(x,y)=\sum_{t=0}^{\infty}(\mathscr{P}^t g_s(x)-\mathscr{P}^t g_s(y))$, $\mathscr{P}g_s(y)=\mathbb{E}[g_s(Y')|Y=y]$. We have, from the exchangeability of $(Y, Y')$ and antisymmetry of $G_s(x, y)$,
$$
\begin{aligned}
    &\mathbb{E}|g_s(Y)|^3=\mathbb{E}|g_s(Y)|g_s(Y)^2=\mathbb{E}|g_s(Y)|g_s(Y)G_s(Y,Y')\\
    =&\mathbb{E}|g_s(Y')|g_s(Y')G_s(Y',Y)    =-\mathbb{E}|g_s(Y')|g_s(Y')G_s(Y,Y')\\
    =&\frac12\mathbb{E}\left\{(|g_s(Y)|g_s(Y)-|g_s(Y')|g_s(Y'))G_s(Y,Y')\right\}\\
    \le&\frac12\mathbb{E}\left\{|g_s(Y)-g_s(Y')||g_s(Y)||G_s(Y,Y')|\right\}+\frac12\mathbb{E}\left\{|g_s(Y)-g_s(Y')||g_s(Y')||G_s(Y,Y')|\right\}\\
    =&\mathbb{E}\left\{|g_s(Y)-g_s(Y')||g_s(Y)||G_s(Y,Y')|\right\}\\
    =&\mathbb{E}\left\{|g_s(Y)|\mathbb{E}[|g_s(Y)-g_s(Y')||G_s(Y,Y')|\mid Y]\right\}
\end{aligned}
$$
From \cref{eq:0039}, we know $\mathbb{E}[|g_s(Y)-g_s(Y')||G_s(Y,Y')|\mid Y]=O(n^{2v-5})$. Therefore,
$$
\mathbb{E}|g_s(Y)|^3= O(n^{2v-5})\mathbb{E}|g_s(Y)|= O(n^{2v-5})\sqrt{\mathrm{Var}(g_s(Y))}= O(n^{\frac32(2v-5)}),
$$
where we used \cref{eq:varhoml} in the last step.
\end{proof}

\color{black}
\section{Proof of Example \ref{ex:CW}}\label{sec:CW}

We use the same notation as in \cref{theorem:1} and \cref{ex:CW}. We use $C$ to denote positive constants depending only on $\beta$. It is straightforward to compute that
\begin{equation}
   \label{eq:2001}
  \begin{aligned}
  g(X)-g(X^{(i)})= \frac{\beta}{2N} (2 s(X) (X_{i}-X_{i}') - (X_{i}-X_{i}')^{2})
  \end{aligned}
\end{equation}
and 
\begin{equation}
   \label{eq:2002}
  \begin{aligned}
    f(X)-f(X^{(i)})= f(X^{[i]})-f(X^{[i-1]}) = \frac{X_{i}-X_{i}'}{\sigma_{N}}.
  \end{aligned}
\end{equation}
For any integers $0\leq u, v\leq 3$, from the facts that $|X_{i}|=1$, $\frac{1}{a}\E [h(X)\lvert f(X) \rvert^2]=\E [|f(Y)|^2]\leq C$ (see \cref{eq:transfer} for the equation) and $|g(X)-g(X^{(i)})|\leq C$, we have 
\begin{equation}
   \label{eq:2003}
  \begin{aligned}
  &\frac{1}{a \sigma_{N}^{v}}\E\left\{ h(X)  |g(X)-g(X^{(i)})|^{u} |X_{i}-X_{i}'|^{v} \right\}\\
  \leq & \frac{C}{a \sigma_{N}^{v}}\E\left\{ h(X)  |g(X)-g(X^{(i)})|^{u}\right\} \\
  \leq & \frac{C}{a \sigma_{N}^{v} N^{u-1_{\{ u=3 \}}}}  \E \left\{ h(X) (\lvert s(X) \rvert^{u-1_{\{ u=3 \}}} + 1) \right\}\\
  \leq & \frac{C}{a \sigma_{N}^{v-(u-1_{\{ u=3 \}})} N^{u-1_{\{ u=3 \}}}}  \E \left\{h(X) (\lvert f(X) \rvert^{u-1_{\{ u=3 \}}} + 1)\right\} \\
  \leq & \frac{C}{\sigma_{N}^{v-(u-1_{\{ u=3 \}})} N^{u-1_{\{ u=3 \}}}}. 
  \end{aligned}
\end{equation}
Combining \cref{eq:2003} and the fact that $|g(X)-g(X^{(i)})|\leq C$,
we obtain
\begin{equation}
   \label{eq:2004}
  \begin{aligned}
  \delta_{1}\leq \frac{C}{\sqrt{N}}.
  \end{aligned}
\end{equation}
From \cref{eq:2001}, \cref{eq:2002} and $\lvert X_{i} \rvert=1$, we have
\begin{equation}
   \label{eq:2005}
  \begin{aligned}
    \Delta_{1,i}(X)= \frac{1}{2 \sigma_{N}^{2}} \E \left[ (X_{i}-X_{i}')^{2}|X \right] = \frac{1}{2 \sigma_{N}^{2}} \E \left[ 2-2X_{i}X_{i}'|X \right]= \frac{1}{\sigma_{N}^{2}},
  \end{aligned}
\end{equation}
and 
\begin{equation}
   \label{eq:2006}
  \begin{aligned}
    \Delta_{2,i}(X)=& \frac{\beta}{4 N \sigma_{N}}\E \left[ (X_{i}-X_{i}')(2 s(X) (X_{i}-X_{i}') - (X_{i}-X_{i}')^{2})|X \right]\\
                =& \frac{\beta}{ N \sigma_{N}}(s(X)-X_{i}).
  \end{aligned}
\end{equation}
Therefore, 
\begin{equation}
   \label{eq:2007}
  \begin{aligned}
  b=\frac{N}{\sigma_{N}^{2}}= 1-\beta.
  \end{aligned}
\end{equation}
From \cref{eq:2005,eq:2006,eq:2007}, we have
\begin{equation}
   \label{eq:2008}
  \begin{aligned}
  \delta_{2}= 0
  \end{aligned}
\end{equation}
and 
\begin{equation}
   \label{eq:2009}
  \begin{aligned}
    \delta_{3} =\sqrt{ \Var\left(\frac{\beta}{\sigma_{N}} s(Y) - \frac{\beta}{N \sigma_{N}} s(Y) - \beta f(Y)\right)}\leq \frac{C}{N}
  \end{aligned}.
\end{equation}
We remark that we can derive the expression of $\sigma_N^2$ from \cref{eq:2007} and \cref{eq:2009} even if we did not know it in the first place. This idea is used to derive the new expression of the asymptotic variance \cref{eq:sigmasq} for the ERGM. 

Using \cref{theorem:1} with the above bounds, we obtain the Wasserstein bound 
\begin{equation}
   \label{eq:2010}
  \begin{aligned}
  d_{\text{Wass}}(W,Z)\leq \frac{C}{\sqrt{N}}.
  \end{aligned}
\end{equation}
For the Kolmogorov bound, we choose $D^{*}_{i}(X, X')= {2}/{\sigma_{N}}$. Then, for the second term in the expression of $\delta_{1}'$,
\begin{equation}
   \label{eq:2012}
  \begin{aligned}
  &\frac{1}{a}\E\left\{ h(X) \Big\lvert\sum_{ i=1 }^{ N } \E \big[ D^*_{i}(X,X')(f(X)- f(X^{(i)}))|X\big]\Big\rvert\right\}\\
=&\frac{2}{a \sigma_{N}} \E\left\{ h(X) \Big\lvert\sum_{ i=1 }^{ N } \E \big[ (f(X)- f(X^{(i)}))|X\big]\Big\rvert\right\}\\
=&\frac{2}{\sigma_{N}} \E \lvert f(Y) \rvert \leq \frac{C}{\sqrt{N}}. \\
  \end{aligned}
\end{equation}
Similar to \cref{eq:2004}, using \cref{eq:2003}, the other terms in the expression of $\delta_{1}'$ are also of the order $O(1/\sqrt{N})$. Thus, 
\begin{equation}
   \label{eq:2013}
  \begin{aligned}
    \delta_{1}'\leq \frac{C}{\sqrt{N}},
  \end{aligned}
\end{equation}
and we obtain 
\begin{equation}
   \label{eq:2014}
  \begin{aligned}
  d_{\text{Kol}}(W,Z)\leq \frac{C}{\sqrt{N}}.
  \end{aligned}
\end{equation}

\section*{Acknowledgements}

Fang X. was partially supported by Hong Kong RGC GRF 14305821, 14304822, 14303423 and a CUHK direct grant.
Liu S.H. was partially supported by National Nature Science Foundation of China NSFC 12301182.
Shao Q.M. was partially supported by National Nature Science Foundation of China NSFC 12031005 and Shenzhen Outstanding Talents Training Fund, China.

\bibliographystyle{apalike}
\bibliography{reference}

\end{document}